\documentclass[12pt]{amsart}
\date{\today}

\input xymatrix
\xyoption{all}

\usepackage{amsmath,amsthm,amsfonts,amssymb,mathrsfs}

\usepackage{color}

\usepackage{amssymb, cite}
\usepackage[colorlinks,plainpages,citecolor=magenta, linkcolor=blue , backref]{hyperref}

\usepackage{hyperref}

  \setlength{\textwidth}{18.5truecm}
   \setlength{\textheight}{25.truecm}
   \setlength{\oddsidemargin}{-28.5pt}
   \setlength{\evensidemargin}{-28.5pt}
   \setlength{\topmargin}{-35pt}


\newtheorem{theorem}{Theorem}[section]
\newtheorem{proposition}[theorem]{Proposition}
\newtheorem{corollary}[theorem]{Corollary}
\newtheorem{lemma}[theorem]{Lemma}

\theoremstyle{definition}
\newtheorem{example}[theorem]{Example}
\newtheorem{remark}[theorem]{Remark}
\newtheorem{question}[theorem]{Question}

\begin{document}

\title[On the monoid of cofinite partial isometries of $\mathbb{N}$  with the usual metric]{On the monoid of cofinite partial isometries of $\mathbb{N}$ with the usual metric}

\author[O.~Gutik and A.~Savchuk]{Oleg~Gutik and Anatolii~Savchuk}
\address{Faculty of Mathematics, Ivan Franko National
University of Lviv, Universytetska 1, Lviv, 79000, Ukraine}
\email{oleg.gutik@lnu.edu.ua, ovgutik@yahoo.com, asavchuk3333@gmail.com}

\keywords{Partial isometry, inverse semigroup, partial bijection, bicyclic monoid, embedding, group congruence, generator}

\subjclass[2010]{20M18, 20M20, 20M30}

\begin{abstract}
In the paper we show that the monoid $\mathbf{I}\mathbb{N}_{\infty}$ of all partial cofinite isometries of positive integers  does not embed isomorphically into the monoid $\mathbf{ID}_{\infty}$ of all partial cofinite isometries of integers.  Moreover, every non-annihilating homomorphism $\mathfrak{h}\colon \mathbf{I}\mathbb{N}_{\infty}\to\mathbf{ID}_{\infty}$ has the following property:   the image $(\mathbf{I}\mathbb{N}_{\infty})\mathfrak{h}$ is isomorphic  either to the two-element cyclic group $\mathbb{Z}_2$ or to the additive group of integers $\mathbb{Z}(+)$. Also we prove that the monoid  $\mathbf{I}\mathbb{N}_{\infty}$ is not  finitely generated, and, moreover, monoid $\mathbf{I}\mathbb{N}_{\infty}$ does not contain a minimal generating set.
\end{abstract}

\maketitle


\section{\textbf{Introduction and preliminaries}}
In this paper we shall follow the terminology of \cite{Clifford-Preston-1961-1967, Lawson-1998}.
We shall denote the first infinite cardinal by $\omega$ and the cardinality of a set $A$ by $|A|$. For any positive integer $n$ by $\mathscr{S}_n$ we denote the group of permutations of the set $\{1,\ldots,n\}$.

We shall say that a non-empty subset $A$ of  a semigroup $S$ \emph{generates} $S$, or $A$ is a \emph{set of generators} of $S$, or $A$ is a \emph{generating set} of $S$, if for any $s\in S$ there exist $a_1,\ldots,a_k\in A$ such that $s=a_1\cdots a_k$. For any non-empty subset $A$ of a semigroup $S$ by $\langle A\rangle$ we denote a subsemigroup of $S$ which is generated by $A$.  A generating set $A$ of a semigroup $S$ is called \emph{minimal generating}, if $A$ does not properly contain any generating set of $S$. It is obvious that every finite generation set of a semigroup has a minimal generating set.

A semigroup $S$ is called {\it inverse} if for any
element $x\in S$ there exists a unique $x^{-1}\in S$ such that
$xx^{-1}x=x$ and $x^{-1}xx^{-1}=x^{-1}$. The element $x^{-1}$ is
called the {\it inverse of} $x\in S$. If $S$ is an inverse
semigroup, then the function $\operatorname{inv}\colon S\to S$
which assigns to every element $x$ of $S$ its inverse element
$x^{-1}$ is called the {\it inversion}.

If $S$ is a semigroup, then we shall denote the subset of all
idempotents in $S$ by $E(S)$. If $S$ is an inverse semigroup, then
$E(S)$ is closed under multiplication and we shall refer to $E(S)$ as a
\emph{band} (or the \emph{band of} $S$). Then the semigroup
operation on $S$ determines the following partial order $\preccurlyeq$
on $E(S)$: $e\preccurlyeq f$ if and only if $ef=fe=e$. This order is
called the {\em natural partial order} on $E(S)$. A
\emph{semilattice} is a commutative semigroup of idempotents.

If $S$ is an inverse semigroup then the semigroup operation on $S$ determines the following partial order $\preccurlyeq$
on $S$: $s\preccurlyeq t$ if and only if there exists $e\in E(S)$ such that $s=te$. This order is
called the {\em natural partial order} on $S$ \cite{Wagner-1952}.

A congruence $\mathfrak{C}$ on a semigroup $S$ is called
\emph{non-trivial} if $\mathfrak{C}$ is distinct from the universal and
identity congruences on $S$, and a \emph{group congruence} if the quotient
semigroup $S/\mathfrak{C}$ is a group.
Every inverse semigroup $S$ admits the \emph{least} (\emph{minimum}) group
congruence $\mathfrak{C}_{\mathbf{mg}}$:
\begin{equation*}
    a\mathfrak{C}_{\mathbf{mg}} b \; \hbox{ if and only if there exists }\;
    e\in E(S) \; \hbox{ such that }\; ae=be
\end{equation*}
(see \cite[Lemma~III.5.2]{Petrich-1984}).

The bicyclic monoid ${\mathscr{C}}(p,q)$ is the semigroup with the identity $1$ generated by two elements $p$ and $q$ subjected only to the condition $pq=1$. The semigroup operation on ${\mathscr{C}}(p,q)$ is determined as
follows:
\begin{equation*}
    q^kp^l\cdot q^mp^n=q^{k+m-\min\{l,m\}}p^{l+n-\min\{l,m\}}.
\end{equation*}
It is well known that the bicyclic monoid ${\mathscr{C}}(p,q)$ is a bisimple (and hence simple) combinatorial $E$-unitary inverse semigroup and every non-trivial congruence on ${\mathscr{C}}(p,q)$ is a group congruence \cite{Clifford-Preston-1961-1967}.

If $\alpha\colon X\rightharpoonup Y$ is a partial map, then we shall denote
the domain and the range of $\alpha$ by $\operatorname{dom}\alpha$ and $\operatorname{ran}\alpha$, respectively. A partial map $\alpha\colon X\rightharpoonup Y$ is called \emph{cofinite} if both sets $X\setminus\operatorname{dom}\alpha$ and $Y\setminus\operatorname{ran}\alpha$ are finite.

Let $\mathscr{I}_\lambda$ denote the set of all partial one-to-one
transformations of a non-zero  cardinal $\lambda$ together
with the following semigroup operation:
\begin{equation*}
x(\alpha\beta)=(x\alpha)\beta \quad \hbox{if} \quad x\in\operatorname{dom}(\alpha\beta)=\{
y\in\operatorname{dom}\alpha\colon y\alpha\in\operatorname{dom}\beta\}, \qquad  \hbox{for} \quad
\alpha,\beta\in\mathscr{I}_\lambda.
\end{equation*}
 The semigroup
$\mathscr{I}_\lambda$ is called the \emph{symmetric inverse} (\emph{monoid})
\emph{semigroup} over cardinal $\lambda$~(see \cite{Clifford-Preston-1961-1967}). The symmetric inverse
semigroup was introduced by Wagner~\cite{Wagner-1952} and it plays
a major role in the theory of semigroups. By $\mathscr{I}^{\mathrm{cf}}_\lambda$ we denote a
subsemigroup of injective partial selfmaps of $\lambda$ with
cofinite domains and ranges in $\mathscr{I}_\lambda$. Obviously, $\mathscr{I}^{\mathrm{cf}}_\lambda$ is an inverse
submonoid of the semigroup $\mathscr{I}_\lambda$. The
semigroup $\mathscr{I}^{\mathrm{cf}}_\lambda$  is called the \emph{monoid of
injective partial cofinite selfmaps} of $\lambda$ \cite{Gutik-Repovs-2015}.


A partial transformation $\alpha\colon (X,d)\rightharpoonup (X,d)$ of a metric space $(X,d)$ is called \emph{isometric} or a \emph{partial isometry}, if $d(x\alpha,y\alpha)=d(x,y)$ for all $x,y\in \operatorname{dom}\alpha$. It is obvious that the composition of two partial isometries of a metric space $(X,d)$ is a partial isometry, and the converse partial map to a partial isometry is a partial isometry, too. Hence the set of partial isometries of a metric space $(X,d)$ with the operation of composition of partial isometries is an inverse submonoid of the symmetric inverse monoid over the cardinal $|X|$. Also, it is obvious that the set of partial cofinite isometries of a metric space $(X,d)$ with the operation of composition of partial isometries is an inverse submonoid of the monoid of injective partial cofinite selfmaps of the cardinal $|X|$.


The semigroup $\mathbf{ID}_{\infty}$ of all partial cofinite isometries of the set of integers $\mathbb{Z}$ with the usual metric $d(n,m)=|n-m|$, $n,m\in \mathbb{Z}$, in the Bezushchak papers \cite{Bezushchak-2004, Bezushchak-2008} is considered. In \cite{Bezushchak-2004}  the generators of the semigroup $\mathbf{ID}_{\infty}$ are described and it is proved therein that $\mathbf{ID}_{\infty}$ has the exponential growth. We remark that the semigroup $\mathbf{ID}_{\infty}$ is an inverse submonoid of the  monoid of all partial cofinite bijections of $\mathbb{Z}$, and  elements of $\mathbf{ID}_{\infty}$ are restrictions of isometries of $\mathbb{Z}$ onto its cofinite subsets in the Lawson interpretation (see \cite[p.~9]{Lawson-1998}). Green's relations and principal ideals of $\mathbf{ID}_{\infty}$ are described in \cite{Bezushchak-2008}. In \cite{Gutik-Savchuk-2017} it is shown that the quotient semigroup $\mathbf{ID}_{\infty}/\mathfrak{C}_{\mathbf{mg}}$ is isomorphic to the group ${\mathbf{Iso}}(\mathbb{Z})$ of all isometries of $\mathbb{Z}$, the semigroup $\mathbf{ID}_{\infty}$ is $F$-inverse, and $\mathbf{ID}_{\infty}$ is isomorphic to the semidirect product ${\mathbf{Iso}}(\mathbb{Z})\ltimes_\mathfrak{h}\mathscr{P}_{\!\infty}(\mathbb{Z})$ of the free semilattice with identity $(\mathscr{P}_{\!\infty}(\mathbb{Z}),\cup)$ by the group ${\mathbf{Iso}}(\mathbb{Z})$. Also in \cite{Gutik-Savchuk-2017} there are investigated semigroup and shift-continuous topologies on $\mathbf{ID}_{\infty}$ and embedding of the discrete semigroup $\mathbf{ID}_{\infty}$ into compact-like topological semigroups.


Later we assume that on $\mathbb{N}$ and $\mathbb{Z}$ the usual linear order is considered.

Let $\mathbf{I}\mathbb{N}_{\infty}$ be the set of all partial cofinite isometries of the set of positive integers $\mathbb{N}$ with the usual metric $d(n,m)=|n-m|$, $n,m\in \mathbb{N}$. Then $\mathbf{I}\mathbb{N}_{\infty}$ with the operation of composition of partial isometries is an inverse submonoid of $\mathscr{I}_\omega$. The semigroup $\mathbf{I}\mathbb{N}_{\infty}$ of all partial cofinite isometries of positive integers is studied in \cite{Gutik-Savchuk-2018}. There we described the Green relations on the semigroup $\mathbf{I}\mathbb{N}_{\infty}$, its band, and proved that $\mathbf{I}\mathbb{N}_{\infty}$ is a simple $E$-unitary $F$-inverse semigroup. Also in \cite{Gutik-Savchuk-2018}, the least group congruence $\mathfrak{C}_{\mathbf{mg}}$ on $\mathbf{I}\mathbb{N}_{\infty}$ is described and it is proved that the quotient-semigroup  $\mathbf{I}\mathbb{N}_{\infty}/\mathfrak{C}_{\mathbf{mg}}$ is isomorphic to the additive group of integers $\mathbb{Z}(+)$. An example of a non-group congruence on the semigroup $\mathbf{I}\mathbb{N}_{\infty}$ is presented. Also in \cite{Gutik-Savchuk-2018}, we proved that a congruence on the semigroup $\mathbf{I}\mathbb{N}_{\infty}$ is a group congruence if and only if its restriction onto an isomorphic  copy of the bicyclic semigroup in $\mathbf{I}\mathbb{N}_{\infty}$ is a group congruence and it is shown that $\mathbf{I}\mathbb{N}_{\infty}$ has a non-trivial homomorphic retract which is isomorphic to the bicyclic semigroup. Another non-trivial homomorphic retracts of the monoid $\mathbf{I}\mathbb{N}_{\infty}$ is constructed in \cite{Savchuk-2019}.


The semigroup of monotone, non-decreasing, injective partial
transformations $\varphi$ of $\mathbb{N}$ such that the sets
$\mathbb{N}\setminus\operatorname{dom}\varphi$ and
$\mathbb{N}\setminus\operatorname{ran}\varphi$ are finite  was introduced in \cite{Gutik-Repovs-2011} and was denoted by
$\mathscr{I}_{\infty}^{\!\nearrow}(\mathbb{N})$.
Obviously, $\mathscr{I}_{\infty}^{\!\nearrow}(\mathbb{N})$ is an
inverse subsemigroup of the semigroup $\mathscr{I}_\omega$. The
semigroup $\mathscr{I}_{\infty}^{\!\nearrow}(\mathbb{N})$ is
called \emph{the semigroup of cofinite monotone partial
bijections} of $\mathbb{N}$.
In \cite{Gutik-Repovs-2011} Gutik and Repov\v{s} studied properties of the
semigroup $\mathscr{I}_{\infty}^{\!\nearrow}(\mathbb{N})$. In particular, they showed that
$\mathscr{I}_{\infty}^{\!\nearrow}(\mathbb{N})$ is an inverse bisimple semigroup and all of its non-trivial group homomorphisms are either isomorphisms or
group homomorphisms. It is obvious that $\mathbf{I}\mathbb{N}_{\infty}$ is an inverse submonoid of $\mathscr{I}_{\infty}^{\!\nearrow}(\mathbb{N})$.

Doroshenko in \cite{Doroshenko-2005, Doroshenko-2009} studied the semigroups of endomorphisms of linearly ordered sets $\mathbb{N}$ and $\mathbb{Z}$ and their subsemigroups of cofinite endomorphisms $\mathcal{O}_{fin}(\mathbb{N})$ and $\mathcal{O}_{fin}(\mathbb{Z})$. In \cite{Doroshenko-2009} he described Green's relations, groups of automorphisms, conjugacy, centralizers of elements, growth, and free subsemigroups in these semigroups. In particular, in \cite{Doroshenko-2009} it is proved that, in $\mathcal{O}_{fin}(\mathbb{N})$  the  group  of  automorphisms  consists  only  of  the  identity  mapping,  whereas  the  groups  of  automorphisms of $\mathcal{O}_{fin}(\mathbb{Z})$ is isomorphic to the semigroup of integers with operation of addition and consist only of inner automorphisms.
In \cite{Doroshenko-2005} it was shown  that  both these semigroups do not admit an irreducible system of generators. In their  subsemigroups of cofinite functions all irreducible systems of generators are
described here. Also, here the last semigroups are presented in terms of generators and relations.

A partial map $\alpha\colon \mathbb{N}\rightharpoonup \mathbb{N}$ is
called \emph{almost monotone} if there exists a finite subset $A$ of
$\mathbb{N}$ such that the restriction
$\alpha\mid_{\mathbb{N}\setminus A}\colon \mathbb{N}\setminus
A\rightharpoonup \mathbb{N}$ is a monotone partial map.
By $\mathscr{I}_{\infty}^{\,\Rsh\!\!\!\nearrow}(\mathbb{N})$ we
 denote the semigroup of monotone, almost non-decreasing,
injective partial transformations of $\mathbb{N}$ such that the sets
$\mathbb{N}\setminus\operatorname{dom}\varphi$ and
$\mathbb{N}\setminus\operatorname{ran}\varphi$ are finite for all
$\varphi\in\mathscr{I}_{\infty}^{\,\Rsh\!\!\!\nearrow}(\mathbb{N})$.
Obviously, $\mathscr{I}_{\infty}^{\,\Rsh\!\!\!\nearrow}(\mathbb{N})$
is an inverse subsemigroup of the semigroup $\mathscr{I}_\omega$ and
the semigroup $\mathscr{I}_{\infty}^{\!\nearrow}(\mathbb{N})$ is an
inverse subsemigroup of
$\mathscr{I}_{\infty}^{\,\Rsh\!\!\!\nearrow}(\mathbb{N})$ as well. The
semigroup $\mathscr{I}_{\infty}^{\,\Rsh\!\!\!\nearrow}(\mathbb{N})$
is called \emph{the semigroup of cofinite almost monotone partial
bijections} of $\mathbb{N}$.
In  \cite{Chuchman-Gutik-2010} the semigroup
$\mathscr{I}_{\infty}^{\,\Rsh\!\!\!\nearrow}(\mathbb{N})$ is studied. In particular, it was shown that the semigroup
$\mathscr{I}_{\infty}^{\,\Rsh\!\!\!\nearrow}(\mathbb{N})$ is inverse,
bisimple and all of its non-trivial group homomorphisms are either
isomorphisms or group homomorphisms.  In  \cite{Gutik-Savchuk-2019} we showed that every automorphism of a full inverse subsemigroup of $\mathscr{I}_{\infty}^{\!\nearrow}(\mathbb{N})$ which contains the semigroup $\mathscr{C}_{\mathbb{N}}$ is the identity map. Also, here we  constructed a submonoid $\mathbf{I}\mathbb{N}_{\infty}^{[\underline{1}]}$ of $\mathscr{I}_{\infty}^{\,\Rsh\!\!\!\nearrow}(\mathbb{N})$ with the following property: if $S$ be an inverse subsemigroup of $\mathscr{I}_{\infty}^{\,\Rsh\!\!\!\nearrow}(\mathbb{N})$ such that $S$ contains $\mathbf{I}\mathbb{N}_{\infty}^{[\underline{1}]}$ as a submonoid, then every non-identity congruence $\mathfrak{C}$ on $S$ is a group congruence. We show that if $S$ is an inverse submonoid of $\mathscr{I}_{\infty}^{\,\Rsh\!\!\!\nearrow}(\mathbb{N})$ such that $S$ contains $\mathscr{C}_{\mathbb{N}}$ as a submonoid then $S$ is simple and the quotient semigroup $S/\mathfrak{C}_{\mathbf{mg}}$, where $\mathfrak{C}_{\mathbf{mg}}$ is the minimum group congruence on $S$, is isomorphic to the additive group of integers. Also,  topologizations of inverse submonoids of $\mathscr{I}_{\infty}^{\,\Rsh\!\!\!\nearrow}(\mathbb{N})$  and embeddings of  such semigroups into compact-like topological semigroups are given in \cite{Chuchman-Gutik-2010, Gutik-Savchuk-2019}. Similar results for semigroups of cofinite almost monotone partial
bijections and cofinite almost monotone partial bijections of $\mathbb{Z}$ were obtained in \cite{Gutik-Repovs-2012}.

\smallskip

In the present paper we show that the monoid $\mathbf{I}\mathbb{N}_{\infty}$ does not embed isomorphically into the semigroup $\mathbf{ID}_{\infty}$.  Moreover every non-annihilating homomorphism $\mathfrak{h}\colon \mathbf{I}\mathbb{N}_{\infty}\to\mathbf{ID}_{\infty}$ has the following property:   the image $(\mathbf{I}\mathbb{N}_{\infty})\mathfrak{h}$ is isomorphic  either to $\mathbb{Z}_2$ or to $\mathbb{Z}(+)$. Also we prove that the monoid  $\mathbf{I}\mathbb{N}_{\infty}$ does not have a finite set of generators, and moreover monoid $\mathbf{I}\mathbb{N}_{\infty}$ does not contain a minimal generating set.

\section{\textbf{On homomorphisms from $\mathbf{I}\mathbb{N}_{\infty}$ into $\mathbf{ID}_{\infty}$} }\label{section-2}

The definition of the semigroup $\mathbf{ID}_{\infty}$ implies that for any $\alpha\in \mathbf{ID}_{\infty}$ there exists a unique element $\gamma_{\alpha}$ of the group of units of $\mathbf{ID}_{\infty}$ such that $\alpha\preccurlyeq\gamma_{\alpha}$ (see \cite{Gutik-Savchuk-2017}). Also we have that $\left|\mathbb{Z}\setminus\operatorname{dom}\alpha\right|=\left|\mathbb{Z}\setminus\operatorname{ran}\alpha\right|$ for each $\alpha\in\mathbf{ID}_{\infty}$. Hence we get the following obvious lemma:

\begin{lemma}\label{lemma-2.1}
If $\alpha=\beta\gamma$ for some  $\alpha,\beta,\gamma\in\mathbf{ID}_{\infty}$ then
\begin{equation*}
  \max\left\{\left|\mathbb{Z}\setminus\operatorname{dom}\beta\right|, \left|\mathbb{Z}\setminus\operatorname{dom}\gamma\right|\right\}\leq \left|\mathbb{Z}\setminus\operatorname{dom}\alpha\right|\leq \left|\mathbb{Z}\setminus\operatorname{dom}\beta\right|+ \left|\mathbb{Z}\setminus\operatorname{dom}\gamma\right|.
\end{equation*}
\end{lemma}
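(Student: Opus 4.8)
The plan is to work directly with the formula for the composition of partial injective maps and to keep track of the finite complements of the domains and ranges. Writing the composition according to the paper's convention, I have $\operatorname{dom}\alpha=\operatorname{dom}(\beta\gamma)=\{y\in\operatorname{dom}\beta\colon y\beta\in\operatorname{dom}\gamma\}$, so in particular $\operatorname{dom}\alpha\subseteq\operatorname{dom}\beta$ and hence $\mathbb{Z}\setminus\operatorname{dom}\beta\subseteq\mathbb{Z}\setminus\operatorname{dom}\alpha$. Passing to cardinalities immediately yields $\left|\mathbb{Z}\setminus\operatorname{dom}\beta\right|\leq\left|\mathbb{Z}\setminus\operatorname{dom}\alpha\right|$.

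For the part of the maximum involving $\gamma$ I would pass to ranges. Since $x(\beta\gamma)=(x\beta)\gamma$, we have $\operatorname{ran}\alpha\subseteq\operatorname{ran}\gamma$, and the same inclusion argument gives $\left|\mathbb{Z}\setminus\operatorname{ran}\gamma\right|\leq\left|\mathbb{Z}\setminus\operatorname{ran}\alpha\right|$. Now I invoke the remark stated just before the lemma, namely that $\left|\mathbb{Z}\setminus\operatorname{dom}\delta\right|=\left|\mathbb{Z}\setminus\operatorname{ran}\delta\right|$ for every $\delta\in\mathbf{ID}_{\infty}$; applying it to $\gamma$ and to $\alpha$ converts the previous inequality into $\left|\mathbb{Z}\setminus\operatorname{dom}\gamma\right|\leq\left|\mathbb{Z}\setminus\operatorname{dom}\alpha\right|$. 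Together with the first inequality this establishes the lower bound $\max\{\,\cdot\,,\,\cdot\,\}\leq\left|\mathbb{Z}\setminus\operatorname{dom}\alpha\right|$.

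For the upper bound I would decompose the complement of $\operatorname{dom}\alpha$ explicitly. From the formula for $\operatorname{dom}\alpha$ one has the disjoint union
\begin{equation*}
  \mathbb{Z}\setminus\operatorname{dom}\alpha = \left(\mathbb{Z}\setminus\operatorname{dom}\beta\right)\sqcup\left\{y\in\operatorname{dom}\beta\colon y\beta\notin\operatorname{dom}\gamma\right\},
\end{equation*}
where the second set lies inside $\operatorname{dom}\beta$ and is therefore disjoint from the first. The key step is to bound the size of this second set: since $\beta$ is injective, the map $y\mapsto y\beta$ carries it bijectively onto $\operatorname{ran}\beta\cap\left(\mathbb{Z}\setminus\operatorname{dom}\gamma\right)$, whose cardinality is at most $\left|\mathbb{Z}\setminus\operatorname{dom}\gamma\right|$. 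Adding the two finite cardinalities then gives $\left|\mathbb{Z}\setminus\operatorname{dom}\alpha\right|\leq\left|\mathbb{Z}\setminus\operatorname{dom}\beta\right|+\left|\mathbb{Z}\setminus\operatorname{dom}\gamma\right|$, as required.

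There is no genuine obstacle here, as the statement is elementary; the only points demanding a little care are verifying that the two pieces of the decomposition are disjoint (guaranteed because one is a subset of $\operatorname{dom}\beta$ and the other is $\mathbb{Z}\setminus\operatorname{dom}\beta$) and the use of the range--domain cardinality equality to convert the $\gamma$-inclusion, which a priori controls only ranges, into a statement about domains. All cardinals involved are finite, so comparison and addition behave in the usual way.
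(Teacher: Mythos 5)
Your proof is correct and follows exactly the route the paper intends: the paper states this lemma without proof as an ``obvious'' consequence of the composition formula $\operatorname{dom}(\beta\gamma)=\{y\in\operatorname{dom}\beta\colon y\beta\in\operatorname{dom}\gamma\}$ and the remark, made immediately beforehand, that $\left|\mathbb{Z}\setminus\operatorname{dom}\delta\right|=\left|\mathbb{Z}\setminus\operatorname{ran}\delta\right|$ for every $\delta\in\mathbf{ID}_{\infty}$, which are precisely the two ingredients you use. Your write-up simply supplies the details (the disjoint decomposition of $\mathbb{Z}\setminus\operatorname{dom}\alpha$ and the injectivity of $\beta$) that the authors left implicit.
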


\begin{proposition}\label{proposition-2.2}
The semigroup $\mathbf{ID}_{\infty}$ does not contain an isomorphic copy of the bicyclic semigroup.
\end{proposition}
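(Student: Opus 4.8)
The plan is to argue by contradiction: assume there is an injective homomorphism $\phi\colon\mathscr{C}(p,q)\to\mathbf{ID}_{\infty}$ and extract from it a single element whose domain- and range-defects are forced to be unequal, contradicting the identity $|\mathbb{Z}\setminus\operatorname{dom}\alpha|=|\mathbb{Z}\setminus\operatorname{ran}\alpha|$ recorded just before Lemma~\ref{lemma-2.1}.

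First I would translate the defining relation into inverse-semigroup language. Since $pq=1$ one checks $pqp=p$ and $qpq=q$, so by uniqueness of inverses $q=p^{-1}$ in $\mathscr{C}(p,q)$; as every homomorphism of inverse semigroups preserves inversion, writing $P=\phi(p)$ gives $\phi(q)=P^{-1}$. The bicyclic idempotents $q^np^n$ form the strictly descending chain $1=q^0p^0\succ qp\succ q^2p^2\succ\cdots$, which $\phi$ carries to a strictly descending chain of idempotents of $\mathbf{ID}_{\infty}$. Because the idempotents of $\mathbf{ID}_{\infty}$ are exactly the partial identities $\operatorname{id}_A$ on cofinite $A\subseteq\mathbb{Z}$, ordered by inclusion, I record $\phi(1)=\operatorname{id}_{A_0}$ and $\phi(qp)=\operatorname{id}_{A_1}$ with $A_1\subsetneq A_0$.

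Next I would use $PP^{-1}=\operatorname{id}_{\operatorname{dom}P}$ and $P^{-1}P=\operatorname{id}_{\operatorname{ran}P}$ (valid in $\mathscr{I}_\omega$ under the left-to-right composition of this paper), together with $PP^{-1}=\phi(pq)=\phi(1)$ and $P^{-1}P=\phi(qp)$, to conclude $\operatorname{dom}P=A_0$ and $\operatorname{ran}P=A_1$. Hence $\operatorname{ran}P\subsetneq\operatorname{dom}P$, so $\mathbb{Z}\setminus\operatorname{ran}P$ properly contains $\mathbb{Z}\setminus\operatorname{dom}P$; as both sets are finite this yields $|\mathbb{Z}\setminus\operatorname{ran}P|>|\mathbb{Z}\setminus\operatorname{dom}P|$, contradicting the equality $|\mathbb{Z}\setminus\operatorname{dom}P|=|\mathbb{Z}\setminus\operatorname{ran}P|$ which holds for every element of $\mathbf{ID}_{\infty}$ (being a consequence of each such element being the restriction of a genuine bijection of $\mathbb{Z}$). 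Therefore no embedding $\phi$ can exist.

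The only substantive step is the first one: recognizing that a bicyclic copy necessarily contains an element $P$ with $\operatorname{ran}P\subsetneq\operatorname{dom}P$. Once this structural feature is isolated, the arithmetic of defects is immediate, and it is precisely this defect balance---which fails on $\mathbb{N}$ but holds on $\mathbb{Z}$---that distinguishes $\mathbf{ID}_{\infty}$ from $\mathbf{I}\mathbb{N}_{\infty}$. I would phrase the conclusion so that it also makes transparent, \emph{a fortiori}, why the entire descending chain $A_0\supsetneq A_1\supsetneq\cdots$ cannot be realized inside $\mathbf{ID}_{\infty}$, although a single strict inclusion already suffices for the contradiction.
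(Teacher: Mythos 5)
Your proof is correct, and it reaches the contradiction by a genuinely different (though closely related) route than the paper does. Both arguments ultimately rest on the same invariant of $\mathbf{ID}_{\infty}$, namely the equality $\left|\mathbb{Z}\setminus\operatorname{dom}\gamma\right|=\left|\mathbb{Z}\setminus\operatorname{ran}\gamma\right|$ recorded just before Lemma~\ref{lemma-2.1}, but they deploy it differently. The paper never inspects the range of the image of $p$: it factors the identity as $1=p(qp)q$, applies the embedding to obtain a factorization of $\varepsilon_0=(1)\mathfrak{h}$ through $\varepsilon_1=(qp)\mathfrak{h}$, and then invokes Lemma~\ref{lemma-2.1} (the defect of a product dominates the defects of its factors) to force $\left|\mathbb{Z}\setminus\operatorname{dom}\varepsilon_1\right|\leq\left|\mathbb{Z}\setminus\operatorname{dom}\varepsilon_0\right|$, contradicting $\operatorname{dom}\varepsilon_1\varsubsetneq\operatorname{dom}\varepsilon_0$. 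You instead use the identities $PP^{-1}=\operatorname{id}_{\operatorname{dom}P}$ and $P^{-1}P=\operatorname{id}_{\operatorname{ran}P}$, together with $\phi(q)=P^{-1}$, to translate the bicyclic relations into $\operatorname{dom}P=A_0$ and $\operatorname{ran}P=A_1\varsubsetneq A_0$, and then contradict the defect equality for the single element $P$. Your version is more economical: Lemma~\ref{lemma-2.1} is not needed at all, and the obstruction is isolated in its sharpest form --- a bicyclic generator is a non-unit whose range idempotent sits strictly below its domain idempotent, which is impossible when domain and range defects must agree; this is exactly what fails for $\mathbb{N}$ but holds for $\mathbb{Z}$. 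The price is that you must invoke the (standard) facts that $q=p^{-1}$ in $\mathscr{C}(p,q)$, that homomorphisms of inverse semigroups preserve inversion, and that idempotents of $\mathbf{ID}_{\infty}$ are precisely the partial identities of cofinite subsets, whereas the paper's idempotent-plus-factorization argument needs only the last of these. Incidentally, your route also sidesteps a typographical slip in the paper's proof, where $1=p(qp)q$ is printed as $1=p(qp)p$ and its image as $\beta\varepsilon_1\alpha$ rather than $\alpha\varepsilon_1\beta$; in your argument there is no such bookkeeping to get wrong.
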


\begin{proof}
Suppose to the contrary that there exists a subsemigroup $S$ of $\mathbf{ID}_{\infty}$ which is isomorphic to the bicyclic semigroup $\mathscr{C}(p,q)$. Let $\mathfrak{h}\colon\mathscr{C}(p,q)\to S$ be an embedding isomorphism. Put $(1)\mathfrak{h}=\varepsilon_0$, $(qp)\mathfrak{h}=\varepsilon_1$, $(p)\mathfrak{h}=\alpha$ and $(q)\mathfrak{h}=\beta$. Then $\varepsilon_0$ and $\varepsilon_1$ are idempotent of $\mathbf{ID}_{\infty}$ such that $\varepsilon_1\preccurlyeq\varepsilon_0$. The definition of the semigroup $\mathbf{ID}_{\infty}$ implies that $\varepsilon_0$ and $\varepsilon_1$ are the identity maps of $\operatorname{dom}\varepsilon_0$ and $\operatorname{dom}\varepsilon_1$, respectively, and moreover $\operatorname{dom}\varepsilon_1\varsubsetneq\operatorname{dom}\varepsilon_0$. Since $1=p(qp)p$, we get that $\varepsilon_0=\beta\varepsilon_1\alpha$. The latter equality and Lemma~\ref{lemma-2.1} imply that
\begin{equation*}
  \left|\mathbb{Z}\setminus\operatorname{dom}\varepsilon_1\right|\leq \left|\mathbb{Z}\setminus\operatorname{dom}\varepsilon_0\right|.
\end{equation*}
The obtained inequality contradicts the inclusion $\operatorname{dom}\varepsilon_1\varsubsetneq\operatorname{dom}\varepsilon_0$, because $\varepsilon_0\neq\varepsilon_1$.
\end{proof}

It is obvious that for every $\alpha\in\mathbf{I}\mathbb{N}_{\infty}$ there exist infinitely many $\gamma\in\mathbf{ID}_{\infty}$ such that $\alpha$ is the restriction of $\gamma$ onto $\mathbb{N}$. This motivated Taras Banakh to ask:

\begin{question}\label{question-2.3}
Does the semigroup  $\mathbf{ID}_{\infty}$ contain an isomorphic copy of $\mathbf{I}\mathbb{N}_{\infty}$?
\end{question}

In this section we give a negative answer on this question.

\begin{remark}\label{remark-2.4}
We observe that the bicyclic semigroup is isomorphic to the
semigroup $\mathscr{C}_{\mathbb{N}}$ which is
generated by partial transformations $\alpha$ and $\beta$ of the set
of positive integers $\mathbb{N}$, defined as follows:
\begin{equation*}
\operatorname{dom}\alpha=\mathbb{N}, \qquad \operatorname{ran}\alpha=\mathbb{N}\setminus\{1\},  \qquad (n)\alpha=n+1
\end{equation*}
and
\begin{equation*}
\operatorname{dom}\beta=\mathbb{N}\setminus\{1\}, \qquad \operatorname{ran}\beta=\mathbb{N},  \qquad (n)\beta=n-1
\end{equation*}
(see Exercise~IV.1.11$(ii)$ in \cite{Petrich-1984}). It is obvious that $\mathscr{C}_{\mathbb{N}}$ is a submonoid  of $\mathbf{I}\mathbb{N}_{\infty}$.
\end{remark}

Proposition~\ref{proposition-2.2} and Remark~\ref{remark-2.4} imply the following statement which gives a negative answer to Question~\ref{question-2.3}.

\begin{theorem}\label{theorem-2.5}
The semigroup  $\mathbf{ID}_{\infty}$ does not contain an isomorphic copy of the semigroup $\mathbf{I}\mathbb{N}_{\infty}$.
\end{theorem}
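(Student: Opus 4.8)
The plan is to combine Proposition~\ref{proposition-2.2} with Remark~\ref{remark-2.4} in the most direct way possible. By Remark~\ref{remark-2.4}, the monoid $\mathbf{I}\mathbb{N}_{\infty}$ contains the submonoid $\mathscr{C}_{\mathbb{N}}$, which is isomorphic to the bicyclic semigroup $\mathscr{C}(p,q)$. Hence any hypothetical isomorphic embedding of $\mathbf{I}\mathbb{N}_{\infty}$ into $\mathbf{ID}_{\infty}$ would restrict to an isomorphic embedding of $\mathscr{C}_{\mathbb{N}}$, and therefore of $\mathscr{C}(p,q)$, into $\mathbf{ID}_{\infty}$.

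First I would argue by contradiction: suppose there exists an isomorphism $\mathfrak{h}$ from $\mathbf{I}\mathbb{N}_{\infty}$ onto some subsemigroup of $\mathbf{ID}_{\infty}$. Then $(\mathscr{C}_{\mathbb{N}})\mathfrak{h}$ is a subsemigroup of $\mathbf{ID}_{\infty}$ which, being the homomorphic image of $\mathscr{C}_{\mathbb{N}}$ under the injective map $\mathfrak{h}$, is isomorphic to $\mathscr{C}_{\mathbb{N}}$, and hence isomorphic to the bicyclic semigroup. This exhibits an isomorphic copy of the bicyclic semigroup inside $\mathbf{ID}_{\infty}$, which directly contradicts Proposition~\ref{proposition-2.2}. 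Therefore no such embedding exists, proving the theorem.

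The argument is essentially a two-line syllogism, so there is no real obstacle to surmount here; all the genuine work has already been done in establishing Proposition~\ref{proposition-2.2}. The only point requiring a moment's care is the observation that the restriction of an isomorphic embedding to a submonoid is again an isomorphic embedding of that submonoid, so that $(\mathscr{C}_{\mathbb{N}})\mathfrak{h}\cong\mathscr{C}_{\mathbb{N}}\cong\mathscr{C}(p,q)$; this is immediate since injectivity and the homomorphism property are inherited by restriction.
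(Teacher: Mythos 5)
Your proposal is correct and is exactly the paper's argument: the paper derives Theorem~\ref{theorem-2.5} immediately from Proposition~\ref{proposition-2.2} and Remark~\ref{remark-2.4}, and your write-up simply makes explicit the (routine) step that an embedding of $\mathbf{I}\mathbb{N}_{\infty}$ would restrict to an embedding of the bicyclic submonoid $\mathscr{C}_{\mathbb{N}}$, contradicting Proposition~\ref{proposition-2.2}. No gaps; nothing further is needed.
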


Next we shall discuss maximal subgroups (i.e., on $\mathscr{H}$-classes with an idempotent) in the semigroup $\mathbf{ID}_{\infty}$.

\smallskip

The following statement belongs to the folklore of the geometric group theory.

\begin{lemma}\label{lemma-2.6}
The group of isometries of the set of integers $\mathbb{Z}$ with the usual metric is isomorphic to the semidirect product $\mathbb{Z}(+)\rtimes\mathbb{Z}_2$.
\end{lemma}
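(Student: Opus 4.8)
The plan is to exhibit the group $\mathbf{Iso}(\mathbb{Z})$ explicitly and then recognize it as the stated semidirect product. First I would observe that every isometry $\varphi$ of $(\mathbb{Z},d)$ with $d(n,m)=|n-m|$ is completely determined by its values on two points, since it must preserve the distance $1$: the point $0\varphi$ can be arbitrary, call it $a$, and then $1\varphi$ must be either $a+1$ or $a-1$. An easy induction on $|n|$ shows that these two choices force $n\varphi=a+n$ for all $n$ in the first case (a translation) and $n\varphi=a-n$ in the second (a reflection composed with a translation). Hence as a set $\mathbf{Iso}(\mathbb{Z})=\{\varphi_{a,\varepsilon}\colon a\in\mathbb{Z},\ \varepsilon\in\{+1,-1\}\}$, where $n\varphi_{a,\varepsilon}=a+\varepsilon n$.

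Next I would set up the normal subgroup and the complement. Let $T=\{\varphi_{a,+1}\colon a\in\mathbb{Z}\}$ be the subgroup of translations; the map $\varphi_{a,+1}\mapsto a$ is clearly an isomorphism $T\cong\mathbb{Z}(+)$. Let $K=\{\varphi_{0,+1},\varphi_{0,-1}\}$ be the subgroup consisting of the identity and the reflection $n\mapsto -n$; since $\varphi_{0,-1}$ has order $2$ we get $K\cong\mathbb{Z}_2$. I would then verify directly from the composition rule $\varphi_{a,\varepsilon}\varphi_{b,\delta}=\varphi_{a\delta+b,\ \varepsilon\delta}$ (obtained by plugging in: $n(\varphi_{a,\varepsilon}\varphi_{b,\delta})=b+\delta(a+\varepsilon n)=(b+\delta a)+\varepsilon\delta n$) that $T$ is normal, that $T\cap K=\{\varphi_{0,+1}\}$ is trivial, and that every element factors uniquely as $\varphi_{a,\varepsilon}=\varphi_{a,+1}\cdot\varphi_{0,\varepsilon}\in TK$. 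These three facts are exactly the internal-semidirect-product criterion, so $\mathbf{Iso}(\mathbb{Z})\cong T\rtimes K\cong\mathbb{Z}(+)\rtimes\mathbb{Z}_2$.

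I expect no serious obstacle here, since the statement is, as the paper notes, folklore; the only point requiring a little care is pinning down the action of $K$ on $T$ so that the abstract semidirect product is the correct one. From the conjugation computation $\varphi_{0,-1}\,\varphi_{a,+1}\,\varphi_{0,-1}=\varphi_{-a,+1}$ one reads off that the nontrivial element of $\mathbb{Z}_2$ acts on $\mathbb{Z}(+)$ by inversion $a\mapsto -a$, which is the standard infinite dihedral action; I would remark that this identifies $\mathbf{Iso}(\mathbb{Z})$ with the infinite dihedral group $D_\infty$. The bookkeeping in the composition formula is the main thing to get right, but it is entirely routine.
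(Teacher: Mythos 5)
The paper offers no proof of this lemma at all: it is stated as ``belonging to the folklore of the geometric group theory,'' so there is nothing of the authors' to compare your argument against --- your proposal supplies the missing proof. The proof is correct and is the standard one: classify the isometries as $n\mapsto a+\varepsilon n$ by the rigidity argument (image of $0$ plus a sign, then induction), take $T\cong\mathbb{Z}(+)$ to be the translations and $K\cong\mathbb{Z}_2$ the point reflection at $0$, verify the internal semidirect-product criterion, and read off from the conjugation $\varphi_{0,-1}\varphi_{a,+1}\varphi_{0,-1}=\varphi_{-a,+1}$ that the action is by inversion, i.e.\ that $\mathbf{Iso}(\mathbb{Z})$ is the infinite dihedral group. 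One bookkeeping slip is worth fixing: with your own composition convention (maps acting on the right, $n(\varphi_{a,\varepsilon}\varphi_{b,\delta})=(b+\delta a)+\varepsilon\delta n$), the product $\varphi_{a,+1}\varphi_{0,\varepsilon}$ equals $\varphi_{\varepsilon a,\varepsilon}$, not $\varphi_{a,\varepsilon}$; the correct factorization is $\varphi_{a,\varepsilon}=\varphi_{\varepsilon a,+1}\,\varphi_{0,\varepsilon}$, or equivalently $\varphi_{a,\varepsilon}=\varphi_{0,\varepsilon}\,\varphi_{a,+1}$ if one prefers the $KT$ ordering. This does not damage the argument --- since $a\mapsto\varepsilon a$ is a bijection of $\mathbb{Z}$, the set equality $TK=\mathbf{Iso}(\mathbb{Z})$ and the uniqueness of the factorization both survive --- but as written the displayed factorization is false for $\varepsilon=-1$, $a\neq 0$.
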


The following lemma describes cyclic subgroups of the group of isometries of the set of integers $\mathbb{Z}$ with the usual metric.

\begin{lemma}\label{lemma-2.7}
Let $G$ be a cyclic subgroup of the group of isometries of the set of integers $\mathbb{Z}$ with the usual metric. Then only one of the following conditions holds:
\begin{itemize}
  \item[$(i)$] $G$ is a singleton;
  \item[$(ii)$] $G$ is isomorphic to $\mathbb{Z}_2$;
  \item[$(iii)$] $G$ is isomorphic to $\mathbb{Z}(+)$.
\end{itemize}
\end{lemma}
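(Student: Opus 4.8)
The plan is to reduce the statement to a classification of cyclic subgroups of the infinite dihedral group via Lemma~\ref{lemma-2.6}. First I would record the explicit normal form for the isometries of $\mathbb{Z}$: an isometry $f$ is determined by the images of the two consecutive points $0$ and $1$, and since $f$ preserves the distance $d(0,1)=1$ we must have $(1)f-(0)f=\pm 1$. Consequently $f$ is either \emph{order-preserving}, in which case it is a translation $n\mapsto n+k$ for some $k\in\mathbb{Z}$, or \emph{order-reversing}, in which case it is a reflection $n\mapsto c-n$ for some $c\in\mathbb{Z}$. (This agrees with the decomposition $\mathbb{Z}(+)\rtimes\mathbb{Z}_2$ of Lemma~\ref{lemma-2.6}, the normal subgroup $\mathbb{Z}(+)$ being the translations and the $\mathbb{Z}_2$-factor recording the orientation.)

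Since $G$ is cyclic, it is generated by a single isometry $g$, and I would argue by a case analysis on the type of $g$. If $g$ is the identity, then $G=\{g\}$ is a singleton and $(i)$ holds. If $g$ is a translation $n\mapsto n+k$ with $k\neq 0$, then $g^m$ is the translation $n\mapsto n+mk$, which equals the identity only for $m=0$; hence $g$ has infinite order and $G\cong\mathbb{Z}(+)$, giving $(iii)$. Finally, if $g$ is a reflection $n\mapsto c-n$, then a direct computation gives $(n)g^2=c-(c-n)=n$, so $g^2$ is the identity while $g$ itself is not; thus $g$ has order $2$ and $G=\{\operatorname{id},g\}\cong\mathbb{Z}_2$, giving $(ii)$. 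That these three cases are mutually exclusive is immediate, since the resulting groups have cardinalities $1$, $\omega$ and $2$, respectively.

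I expect no genuine obstacle here: the argument is essentially the classification of elements of $D_\infty$. The only points requiring care are the justification that every isometry of $\mathbb{Z}$ falls into exactly one of the two normal forms above, and the observation that \emph{every} reflection is an involution irrespective of the parameter $c$, so that the order-reversing case can never produce a copy of $\mathbb{Z}(+)$.
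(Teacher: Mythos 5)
Your proof is correct and takes essentially the same route as the paper: a case analysis on the type of the generator (identity, translation, reflection), with the key observations that every reflection squares to the identity and every nontrivial translation has infinite order. The only cosmetic difference is that you work with concrete normal forms $n\mapsto n+k$ and $n\mapsto c-n$, whereas the paper phrases the same four-way case split abstractly in terms of pairs $(a,b)\in\mathbb{Z}(+)\rtimes\mathbb{Z}_2$ from Lemma~\ref{lemma-2.6}; you also merge the paper's two reflection cases ($a=0$ and $a\neq 0$) into one.
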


\begin{proof}
Fix a generator $(a,b)$ of $G$. Next we consider all possible cases.

\smallskip

\textbf{1.} Suppose that $(a,b)=\left(0,\overline{0}\right)$ where $0$ and $\overline{0}$ are neutral elements of $\mathbb{Z}(+)$ and $\mathbb{Z}_2$, respectively. Then the group operation of $\mathbb{Z}(+)\rtimes\mathbb{Z}_2$ implies that $\left(0,\overline{0}\right)^n=\left(0,\overline{0}\right)$ for any integer $n$, and hence $G$ is a singleton.

\smallskip

\textbf{2.} Suppose that $(a,b)=\left(0,\overline{1}\right)$ where $\overline{1}$ is a non-neutral element of $\mathbb{Z}_2$. Then we have that $\left(0,\overline{1}\right)^2=\left(0,\overline{0}\right)$, and hence $G$ is isomorphic to $\mathbb{Z}_2$.

\smallskip

\textbf{3.} Suppose that $(a,b)=\left(g,\overline{0}\right)$ where $g$ is a non-neutral element of $\mathbb{Z}(+)$. Then $\left(g,\overline{0}\right)^n=\left(n\cdot g,\overline{0}\right)$ for any integer $n$, and hence $G$ is isomorphic to $\mathbb{Z}(+)$.

\smallskip

\textbf{4.} Suppose that $(a,b)=\left(g,\overline{1}\right)$ where $g$ is a non-neutral element of $\mathbb{Z}(+)$. Then we have that
\begin{equation*}
  \left(g,\overline{1}\right)\left(g,\overline{1}\right)=\left(g-g,\overline{1}\cdot\overline{1}\right)= \left(0,\overline{0}\right),
\end{equation*}
and hence $G$ is isomorphic to $\mathbb{Z}_2$.
\end{proof}

A subset $C\subseteq \mathbb{R}$ is called \emph{symmetric} in $\mathbb{R}$ if there exists a number $c\in \mathbb{R}$ (the \emph{center} of $C$) such that $c+x\in C$ if and only if $c-x\in C$. A subset $C\subseteq \mathbb{Z}$ is called \emph{symmetric} in $\mathbb{Z}$ if $C$ is symmetric in $\mathbb{R}$.

\begin{remark}\label{remark-2.8}
We observe that a subset $C$ is symmetric in $\mathbb{Z}$ if and only if $\mathbb{Z}\setminus C$ is symmetric in $\mathbb{Z}$. Also, if $\mathbb{Z}$ endowed with the usual metric, then the partial mapping $f_C\colon C\to C$, $c+x\mapsto c-x$ which is determined by the symmetry of the symmetric set $C$ with the centre $c\in\mathbb{R}$ is a partial isometry of $\mathbb{Z}$. In this case we shall say that the \emph{partial map $f_C$ determines a symmetry of $C$}.
\end{remark}

\begin{lemma}\label{lemma-2.9}
Let $C$ be a proper cofinite subset of $\mathbb{Z}$ and $\gamma\colon \mathbb{Z}\rightharpoonup\mathbb{Z}$ be a partial isometry of $\mathbb{Z}$ such that $\operatorname{dom}\gamma=\operatorname{ran}\gamma=C$. Then $\gamma$ is either the identity map of $C$ or $\gamma$ determines a symmetry of $C$.
\end{lemma}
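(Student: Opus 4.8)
The plan is to reduce the statement to the classification of global isometries of $\mathbb{Z}$. First I would note that, since $C$ is cofinite, the map $\gamma$ belongs to $\mathbf{ID}_{\infty}$, and hence, by the fact recalled at the beginning of Section~\ref{section-2}, $\gamma$ is the restriction to $C=\operatorname{dom}\gamma$ of the unique unit $\gamma_{\gamma}$ of $\mathbf{ID}_{\infty}$ with $\gamma\preccurlyeq\gamma_{\gamma}$; that is, $\gamma$ extends to a global isometry of $\mathbb{Z}$. By Lemma~\ref{lemma-2.6} every element of the group $\mathbf{Iso}(\mathbb{Z})$ has one of the two forms $x\mapsto x+k$ or $x\mapsto k-x$ with $k\in\mathbb{Z}$. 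Thus it suffices to examine these two families, using in each case the hypothesis $\operatorname{ran}\gamma=C$, which reads $\gamma_{\gamma}(C)=C$.

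In the translation case $\gamma_{\gamma}\colon x\mapsto x+k$ one has $\gamma_{\gamma}(C)=C+k$, so $\operatorname{ran}\gamma=C$ forces $C+k=C$, equivalently the set $\mathbb{Z}\setminus C$ is invariant under the translation by $k$. Here I would invoke both hypotheses on $C$: it is proper, so $\mathbb{Z}\setminus C\neq\varnothing$, and it is cofinite, so $\mathbb{Z}\setminus C$ is finite; a nonempty finite subset of $\mathbb{Z}$ can be invariant under $x\mapsto x+k$ only when $k=0$. Hence $\gamma_{\gamma}$ is the identity and $\gamma=\operatorname{id}_{C}$, the first alternative.

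In the reflection case $\gamma_{\gamma}\colon x\mapsto k-x$ one has $\gamma_{\gamma}(C)=k-C$, and $\operatorname{ran}\gamma=C$ now gives $k-C=C$. Setting $c=k/2\in\mathbb{R}$, this equality says precisely that $c+t\in C$ if and only if $c-t\in C$, i.e.\ $C$ is symmetric in $\mathbb{R}$ (hence in $\mathbb{Z}$) with centre $c$; moreover $\gamma$ acts by $c+t\mapsto c-t$, which is exactly the partial map $f_{C}$ of Remark~\ref{remark-2.8}. Therefore $\gamma$ determines a symmetry of $C$, the second alternative.

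The only genuinely nontrivial input is the existence of the global extension $\gamma_{\gamma}$, which I take for free from the uniqueness statement recalled at the start of the section together with Lemma~\ref{lemma-2.6}; everything after that is bookkeeping. Should a self-contained argument be preferred, the extension can be produced directly: choosing two consecutive integers of $C$ (they exist because $C$ is cofinite), their images under $\gamma$ are again consecutive, which forces $\gamma$ to be monotone of slope $\pm 1$ and then to coincide on $C$ with a single affine map $x\mapsto\pm x+k$. I expect the one point needing care in the write-up to be the reflection case, where the centre $c=k/2$ may be a half-integer, so one must phrase the symmetry of $C$ through the real centre $c$ as in the definition preceding Remark~\ref{remark-2.8} rather than assuming an integer centre.
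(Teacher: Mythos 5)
Your proof is correct, and its one substantive step coincides with the paper's: both arguments begin by extending $\gamma$ to the unique global isometry of $\mathbb{Z}$ sitting above it in the group of units of $\mathbf{ID}_{\infty}$. Where you diverge is in how the conclusion is extracted from that extension. You invoke Lemma~\ref{lemma-2.6} to classify the extension as a translation $x\mapsto x+k$ or a reflection $x\mapsto k-x$, and then test the hypothesis $\operatorname{ran}\gamma=C$ in each case: translations force $k=0$ (the identity alternative), reflections force $k-C=C$, i.e.\ symmetry about $c=k/2$. The paper never uses Lemma~\ref{lemma-2.6}; instead it restricts the extension to the finite complement $\mathbb{Z}\setminus C$ (which is mapped onto itself), observes that a nonidentity self-isometry of a finite subset of $\mathbb{Z}$ must interchange its minimum and maximum, reads off the centre $c=\frac{\min(\mathbb{Z}\setminus C)+\max(\mathbb{Z}\setminus C)}{2}$, and transfers the symmetry from $\mathbb{Z}\setminus C$ to $C$ as in Remark~\ref{remark-2.8}. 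Each route buys something. The paper's argument computes the centre concretely from the complement and needs no structure theory of $\mathbf{Iso}(\mathbb{Z})$ beyond the extension itself. Your case analysis is more uniform: in particular, when $\mathbb{Z}\setminus C$ is a singleton, the paper's intermediate claim that the restricted map $\gamma^\circ$ is nonidentity actually fails (the restriction of a reflection to its fixed centre is the identity map of that singleton), a degenerate case that your translation/reflection dichotomy absorbs without any special handling, so your write-up is arguably the cleaner one. Your closing caution about the possibly half-integer centre is exactly the right point of care, and is precisely why the paper defines symmetry of a subset of $\mathbb{Z}$ through a real centre.
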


\begin{proof}
Suppose that the partial map $\gamma$ is a nonidentity. Then $\gamma$ is an element of the semigroup $\mathbf{ID}_{\infty}$. By Corollary~1 of \cite{Gutik-Savchuk-2017}, $\mathbf{ID}_{\infty}$  is an $F$-inverse semigroup, and moreover there exists a unique element $\sigma_\gamma$ of the group of units of $\mathbf{ID}_{\infty}$ such that $\gamma\preccurlyeq\sigma_\gamma$. The latter implies that the partial map $\gamma$ extends to the unique isometry $\sigma_\gamma$ of $\mathbb{Z}$. It is obvious that the restriction of $\sigma_\gamma$ onto the set $\mathbb{Z}\setminus C$ is an isometry of $\mathbb{Z}\setminus C$. We denote this isometry by $\gamma^\circ$. Since $\gamma$ is a nonidentity, so is $\gamma^\circ$. Since $C$ is a proper cofinite subset of $\mathbb{Z}$, $(\max(\mathbb{Z}\setminus C))\gamma^\circ=\min(\mathbb{Z}\setminus C)$ and $(\min(\mathbb{Z}\setminus C))\gamma^\circ=\max(\mathbb{Z}\setminus C)$. Then the isometry of $\mathbb{Z}\setminus C$ by $\gamma^\circ$ implies that
\begin{equation*}
c=\frac{\min(\mathbb{Z}\setminus C)+\max(\mathbb{Z}\setminus C)}{2}
\end{equation*}
is the centre of symmetry of $\mathbb{Z}\setminus C$. It is obvious that $c$ is the centre of symmetry of $C$. This implies the statement of the lemma.
\end{proof}

Since any elements $\alpha$ and $\beta$ are $\mathscr{H}$-equivalent in $\mathbf{ID}_{\infty}$ if and only if $\operatorname{dom}\alpha=\operatorname{dom}\beta$ and $\operatorname{ran}\alpha=\operatorname{ran}\beta$, Lemma~\ref{lemma-2.9} implies the following proposition.

\begin{proposition}\label{proposition-2.10}
Every subgroup of $\mathbf{ID}_{\infty}$ distinct from its group of units is either trivial or isomorphic to $\mathbb{Z}_2$.
\end{proposition}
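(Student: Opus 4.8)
The plan is to realize $G$ inside the maximal subgroup attached to its identity idempotent and then to read off that maximal subgroup directly from Lemma~\ref{lemma-2.9}. First I would recall that every idempotent of $\mathbf{ID}_{\infty}$ is the identity map of its domain, so the identity element $e$ of $G$ is the identity map of the cofinite set $C:=\operatorname{dom}e$. The hypothesis that $G$ is distinct from the group of units forces $e$ to differ from the identity isometry of $\mathbb{Z}$; hence $C$ is a \emph{proper} cofinite subset of $\mathbb{Z}$, which is precisely the situation covered by Lemma~\ref{lemma-2.9}.

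Next I would show that $G$ is contained in the $\mathscr{H}$-class of $e$. For any $g\in G$ with inverse $g^{-1}\in G$ one has $gg^{-1}=g^{-1}g=e$ and $eg=ge=g$, so $g\,\mathscr{H}\,e$. By the description of Green's relation $\mathscr{H}$ on $\mathbf{ID}_{\infty}$ recalled just before the proposition, this yields $\operatorname{dom}g=\operatorname{ran}g=C$. Thus every element of $G$ is a partial isometry of $\mathbb{Z}$ whose domain and range both equal the proper cofinite set $C$.

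Applying Lemma~\ref{lemma-2.9} to each such $g$, I obtain that $g$ is either the identity map of $C$ (that is, $g=e$) or the partial map $f_C$ determining the symmetry of $C$; the latter alternative can occur only when $C$ is symmetric, and then $f_C$ is uniquely determined by Remark~\ref{remark-2.8}. Consequently $G\subseteq\{e,f_C\}$. Since $f_C$ reflects $C$ about its centre, it is an involution, so $f_Cf_C=e$ and $\{e,f_C\}$ is a group isomorphic to $\mathbb{Z}_2$. Therefore $G$ is either the trivial group $\{e\}$ or the two-element group $\{e,f_C\}\cong\mathbb{Z}_2$, as claimed.

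The only delicate point is the correct reading of the hypothesis: one must confirm that a subgroup distinct from the group of units indeed has its identity idempotent supported on a proper cofinite set, so that Lemma~\ref{lemma-2.9} is applicable. Once this is settled the remainder is immediate, requiring no case analysis beyond the symmetric/non-symmetric dichotomy of $C$ already encoded in Lemma~\ref{lemma-2.9}.
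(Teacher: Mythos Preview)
Your argument is correct and is exactly the paper's approach: the paper derives the proposition directly from Lemma~\ref{lemma-2.9} combined with the description of $\mathscr{H}$-classes recalled just before the statement. The interpretive point you flag is resolved by the surrounding context, which frames the discussion in terms of maximal subgroups, so that ``distinct from the group of units'' is to be read as ``with identity idempotent $e\neq\mathbb{I}$''.
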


\begin{theorem}\label{theorem-2.11}
Let $S$ be an inverse submonoid of $\mathscr{I}_{\infty}^{\,\Rsh\!\!\!\nearrow}(\mathbb{N})$ which contains $\mathscr{C}_{\mathbb{N}}$ as a submonoid.  Then for any homomorphism $\mathfrak{h}\colon S\to \mathbf{ID}_{\infty}$  one of the following conditions holds:
\begin{itemize}
  \item[$(i)$] the image $(S)\mathfrak{h}$ is a singleton, i.e.,  $\mathfrak{h}$ is an annihilating homomorphism;
  \item[$(ii)$] the image $(S)\mathfrak{h}$ is isomorphic to $\mathbb{Z}_2$;
  \item[$(iii)$] the image $(S)\mathfrak{h}$ is isomorphic to $\mathbb{Z}(+)$.
\end{itemize}
\end{theorem}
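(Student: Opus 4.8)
The plan is to show that, unless $\mathfrak{h}$ is annihilating, the whole band $E(S)$ is collapsed to a single idempotent, so that the image $(S)\mathfrak{h}$ is forced to be a group; the three possibilities then drop out of the description of the (cyclic) subgroups of $\mathbf{ID}_{\infty}$.

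First I would restrict $\mathfrak{h}$ to the submonoid $\mathscr{C}_{\mathbb{N}}\subseteq S$. By Remark~\ref{remark-2.4} the monoid $\mathscr{C}_{\mathbb{N}}$ is isomorphic to the bicyclic semigroup, while by Proposition~\ref{proposition-2.2} the semigroup $\mathbf{ID}_{\infty}$ contains no isomorphic copy of it; hence $\mathfrak{h}|_{\mathscr{C}_{\mathbb{N}}}$ is not injective and its kernel is a congruence on $\mathscr{C}_{\mathbb{N}}$ distinct from the identity congruence. If this kernel is the universal congruence, then $(\mathscr{C}_{\mathbb{N}})\mathfrak{h}$ is a one-element (trivial) group; otherwise the kernel is non-trivial, and every non-trivial congruence on the bicyclic semigroup is a group congruence, so $(\mathscr{C}_{\mathbb{N}})\mathfrak{h}$ is again a group. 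In either case the image of $\mathscr{C}_{\mathbb{N}}$ is a group, and therefore every idempotent of $\mathscr{C}_{\mathbb{N}}$ — that is, each partial identity $\beta^{n}\alpha^{n}$ of the cofinite set $\{n+1,n+2,\dots\}$, $n\in\omega$ — is sent to the unique idempotent $e_{0}:=(1)\mathfrak{h}$ of that group.

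The crucial step, which I expect to be the main obstacle, is to promote this to the statement that $\mathfrak{h}$ sends the entire band $E(S)$ to $e_{0}$. Each idempotent of $S$ is the partial identity $\operatorname{id}_{A}$ of some cofinite $A\subseteq\mathbb{N}$; choosing $n$ with $\{n+1,n+2,\dots\}\subseteq A$ yields the chain $\beta^{n}\alpha^{n}\preccurlyeq \operatorname{id}_{A}\preccurlyeq 1$ in the natural partial order. As a homomorphism of inverse semigroups $\mathfrak{h}$ preserves $\preccurlyeq$, so $e_{0}\preccurlyeq(\operatorname{id}_{A})\mathfrak{h}\preccurlyeq e_{0}$, and antisymmetry forces $(\operatorname{id}_{A})\mathfrak{h}=e_{0}$. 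Thus $(E(S))\mathfrak{h}=\{e_{0}\}$, and since the idempotents of a homomorphic image of an inverse semigroup are exactly the images of idempotents, $E\big((S)\mathfrak{h}\big)=\{e_{0}\}$, so the inverse semigroup $(S)\mathfrak{h}$ has a single idempotent and is therefore a group. The point to watch is precisely that a cofinal chain of bicyclic idempotents lies below every idempotent of $S$; this is exactly what makes $\mathscr{C}_{\mathbb{N}}$ — rather than some larger distinguished submonoid — sufficient to collapse the whole band.

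It remains to identify the group $(S)\mathfrak{h}$. Since it is a group, $\ker\mathfrak{h}$ is a group congruence and hence contains the least group congruence $\mathfrak{C}_{\mathbf{mg}}$; therefore $(S)\mathfrak{h}$ is a homomorphic image of $S/\mathfrak{C}_{\mathbf{mg}}\cong\mathbb{Z}(+)$ and is in particular cyclic. On the other hand, being a group, $(S)\mathfrak{h}$ lies in a single maximal subgroup $H_{e_{0}}$ of $\mathbf{ID}_{\infty}$, which by Proposition~\ref{proposition-2.10} is trivial, or isomorphic to $\mathbb{Z}_{2}$, or equal to the group of units — the latter being $\mathbb{Z}(+)\rtimes\mathbb{Z}_{2}$ by Lemma~\ref{lemma-2.6}. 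In the first two cases the cyclic group $(S)\mathfrak{h}$ is a singleton or $\mathbb{Z}_{2}$, while in the last case Lemma~\ref{lemma-2.7} shows that a cyclic subgroup of $\mathbb{Z}(+)\rtimes\mathbb{Z}_{2}$ is a singleton, $\mathbb{Z}_{2}$, or $\mathbb{Z}(+)$ (the non-cyclic possibility $\mathbb{Z}(+)\rtimes\mathbb{Z}_{2}$ itself being excluded by cyclicity). Hence exactly one of $(i)$, $(ii)$, $(iii)$ holds, which completes the argument.
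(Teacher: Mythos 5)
Your proposal is correct and takes essentially the same route as the paper's proof: force $\mathfrak{h}|_{\mathscr{C}_{\mathbb{N}}}$ to be non-injective (no bicyclic copy exists in $\mathbf{ID}_{\infty}$), so its image is a cyclic group with all bicyclic idempotents collapsed; extend the collapse to all of $E(S)$ using that every idempotent of $S$ dominates an idempotent of $\mathscr{C}_{\mathbb{N}}$; conclude $(S)\mathfrak{h}$ is a group, hence a quotient of $S/\mathfrak{C}_{\mathbf{mg}}\cong\mathbb{Z}(+)$ and so cyclic; finish with Proposition~\ref{proposition-2.10} and Lemma~\ref{lemma-2.7}. The only cosmetic differences are that you collapse $E(S)$ via order-preservation and antisymmetry of $\preccurlyeq$ where the paper uses a direct multiplicative computation $(\varepsilon)\mathfrak{h}=(\varepsilon\varepsilon_0)\mathfrak{h}=(\varepsilon_0)\mathfrak{h}$, and you cite Proposition~\ref{proposition-2.2} plus the congruence classification for the bicyclic monoid where the paper invokes Theorem~\ref{theorem-2.5} and Corollary~1.32 of Clifford--Preston.
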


\begin{proof}
Suppose that the homomorphism $\mathfrak{h}\colon S\to \mathbf{ID}_{\infty}$ is not annihilating. Since by Remark~\ref{remark-2.4} the monoid $\mathscr{C}_{\mathbb{N}}$ is isomorphic to the bicyclic semigroup, Theorem~\ref{theorem-2.5} implies that the restriction $\mathfrak{h}|_{\mathscr{C}_{\mathbb{N}}}\colon \mathscr{C}_{\mathbb{N}}\to \mathbf{ID}_{\infty}$ is not an injective homomorphism. Then by Corollary~1.32 of \cite{Clifford-Preston-1961-1967} the image $(\mathscr{C}_{\mathbb{N}})\mathfrak{h}$ is a cyclic subgroup of $\mathbf{ID}_{\infty}$ such that $\{(\mathbb{I})\mathfrak{h}\}=(E(\mathscr{C}_{\mathbb{N}}))\mathfrak{h}$.

We shall show that for any idempotent $\varepsilon\in S$ we have that $(\varepsilon)\mathfrak{h}=(\mathbb{I})\mathfrak{h}$. Since $\varepsilon\in \mathscr{I}_{\infty}^{\,\Rsh\!\!\!\nearrow}(\mathbb{N})$, there exists a smallest positive integer $n_\varepsilon$ such that $n\in\operatorname{dom}\varepsilon$ for any $n\geq n_\varepsilon$. Put $\varepsilon_0$ be the identity map of the set $\left\{j\in\mathbb{N}\colon j\geq n_\varepsilon\right\}$. Then $\varepsilon_0$ is an idempotent of $\mathscr{C}_{\mathbb{N}}$ such that $\varepsilon_0\preccurlyeq\varepsilon$ in $S$. The above arguments in the previous paragraph imply that
\begin{equation*}
  (\varepsilon)\mathfrak{h}=(\varepsilon\mathbb{I})\mathfrak{h}=(\varepsilon)\mathfrak{h}(\mathbb{I})\mathfrak{h}= (\varepsilon)\mathfrak{h}(\varepsilon_0)\mathfrak{h}=(\varepsilon \varepsilon_0)\mathfrak{h}=(\varepsilon_0)\mathfrak{h}.
\end{equation*}
Hence we have that $(E(S))\mathfrak{h}=(E(\mathscr{C}_{\mathbb{N}}))\mathfrak{h}$ is a singleton in $\mathbf{ID}_{\infty}$ and moreover the image $(E(S))\mathfrak{h}$ is an idempotent which is the neutral element of the cyclic subgroup $(\mathscr{C}_{\mathbb{N}})\mathfrak{h}$ in  $\mathbf{ID}_{\infty}$. This implies that the image $(S)\mathfrak{h}$ is a subgroup of $\mathbf{ID}_{\infty}$, i.e., the homomorphisms $\mathfrak{h}\colon S\to \mathbf{ID}_{\infty}$ generates a group congruence $\mathfrak{C}_{\mathfrak{h}}$ on the monoid $S$. By Theorem~4 of \cite{Gutik-Savchuk-2019}, the quotient semigroup $S/\mathfrak{C}_{\mathbf{mg}}$, where $\mathfrak{C}_{\mathbf{mg}}$ is the minimum group congruence on $S$, is isomorphic to the additive group of integers $\mathbb{Z}(+)$. This implies that the image $(S)\mathfrak{h}$ is a cyclic subgroup of $\mathbf{ID}_{\infty}$.
Next we apply Lemma~\ref{lemma-2.7} and Proposition~\ref{proposition-2.10}.
\end{proof}

Theorem~\ref{theorem-2.11} implies the following corollary:

\begin{corollary}\label{corollary-2.12}
Let $\mathfrak{h}\colon \mathbf{I}\mathbb{N}_{\infty}\to\mathbf{ID}_{\infty}$ be an arbitrary homomorphism. Then one of the following conditions holds:
\begin{itemize}
  \item[$(i)$] $\mathfrak{h}$ is an annihilating homomorphism;
  \item[$(ii)$] the image $(\mathbf{I}\mathbb{N}_{\infty})\mathfrak{h}$ is isomorphic to $\mathbb{Z}_2$;
  \item[$(iii)$] the image $(\mathbf{I}\mathbb{N}_{\infty})\mathfrak{h}$ is isomorphic to $\mathbb{Z}(+)$.
\end{itemize}
\end{corollary}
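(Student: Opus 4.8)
The plan is to obtain this corollary as a direct specialisation of Theorem~\ref{theorem-2.11}, so the only work is to verify that $\mathbf{I}\mathbb{N}_{\infty}$ is an admissible choice for the monoid $S$ appearing there. To that end I would first check the two hypotheses of Theorem~\ref{theorem-2.11}: that $\mathbf{I}\mathbb{N}_{\infty}$ is an inverse submonoid of $\mathscr{I}_{\infty}^{\,\Rsh\!\!\!\nearrow}(\mathbb{N})$, and that it contains $\mathscr{C}_{\mathbb{N}}$ as a submonoid.

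For the first hypothesis I would invoke the containment chain already recorded in the preliminaries. It was noted that $\mathbf{I}\mathbb{N}_{\infty}$ is an inverse submonoid of $\mathscr{I}_{\infty}^{\!\nearrow}(\mathbb{N})$, while $\mathscr{I}_{\infty}^{\!\nearrow}(\mathbb{N})$ is in turn an inverse subsemigroup of $\mathscr{I}_{\infty}^{\,\Rsh\!\!\!\nearrow}(\mathbb{N})$; composing these inclusions places $\mathbf{I}\mathbb{N}_{\infty}$ inside $\mathscr{I}_{\infty}^{\,\Rsh\!\!\!\nearrow}(\mathbb{N})$ as an inverse submonoid. For the second hypothesis I would appeal to Remark~\ref{remark-2.4}, where it is observed that $\mathscr{C}_{\mathbb{N}}\cong\mathscr{C}(p,q)$ and that $\mathscr{C}_{\mathbb{N}}$ is a submonoid of $\mathbf{I}\mathbb{N}_{\infty}$. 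The one point deserving a moment's attention is that ``submonoid'' must be understood with matching identities: the identities of $\mathscr{C}_{\mathbb{N}}$, of $\mathbf{I}\mathbb{N}_{\infty}$, and of $\mathscr{I}_{\infty}^{\,\Rsh\!\!\!\nearrow}(\mathbb{N})$ all coincide with the full identity map on $\mathbb{N}$, so the monoid structures are compatible throughout the chain.

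Having confirmed the hypotheses, I would simply set $S=\mathbf{I}\mathbb{N}_{\infty}$ in Theorem~\ref{theorem-2.11}. That theorem then asserts verbatim that for any homomorphism $\mathfrak{h}\colon \mathbf{I}\mathbb{N}_{\infty}\to\mathbf{ID}_{\infty}$ exactly one of the following holds: the image $(\mathbf{I}\mathbb{N}_{\infty})\mathfrak{h}$ is a singleton (equivalently, $\mathfrak{h}$ is annihilating), or it is isomorphic to $\mathbb{Z}_2$, or it is isomorphic to $\mathbb{Z}(+)$. These are precisely conditions $(i)$, $(ii)$, $(iii)$ of the corollary, so the proof is complete.

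I do not anticipate any genuine obstacle here, since the statement is a corollary of the preceding theorem rather than a new result. The only care required is the bookkeeping described above, namely assembling the inclusion $\mathbf{I}\mathbb{N}_{\infty}\hookrightarrow\mathscr{I}_{\infty}^{\,\Rsh\!\!\!\nearrow}(\mathbb{N})$ from the two previously stated inclusions and noting the coincidence of identities; once this is in place the conclusion is immediate from Theorem~\ref{theorem-2.11}.
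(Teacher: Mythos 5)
Your proposal is correct and matches the paper's approach: the paper derives Corollary~\ref{corollary-2.12} immediately from Theorem~\ref{theorem-2.11}, relying (implicitly) on exactly the facts you verify, namely that $\mathbf{I}\mathbb{N}_{\infty}$ is an inverse submonoid of $\mathscr{I}_{\infty}^{\,\Rsh\!\!\!\nearrow}(\mathbb{N})$ and contains $\mathscr{C}_{\mathbb{N}}$ as a submonoid (Remark~\ref{remark-2.4} and the preliminaries). Your explicit check of the inclusion chain and the coincidence of identities is just a more careful writing-out of the same specialisation.
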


The following example shows that every cofinite (almost) monotone partial bijection of $\mathbb{N}$ extends to a cofinite (almost) monotone partial bijection of $\mathbb{Z}$.

\begin{example}\label{example-2.13}
Fix an arbitrary $\alpha\in\mathscr{I}_{\infty}^{\,\Rsh\!\!\!\nearrow}(\mathbb{N})$  and any non-positive integer $n$. We define a partial map $\alpha_\mathbb{Z}\colon \mathbb{Z}\rightharpoonup\mathbb{Z}$ in the following way. Put
\begin{align*}
  \operatorname{dom}\alpha_\mathbb{Z}=&\operatorname{dom}\alpha\cup\{i\in \mathbb{Z}\colon i\leq n\}, \\
  \operatorname{ran}\alpha_\mathbb{Z}=&\operatorname{ran}\alpha\cup\{i\in \mathbb{Z}\colon i\leq n\}
\end{align*}
and
\begin{equation*}
  (k)\alpha_\mathbb{Z}=
\left\{
  \begin{array}{cl}
    (k)\alpha, & \hbox{if~} k\in \operatorname{dom}\alpha;\\
    k, & \hbox{if~} k\leq n.
  \end{array}
\right.
\end{equation*}
This determines a map $\mathfrak{i}_n\colon \mathscr{I}_{\infty}^{\,\Rsh\!\!\!\nearrow}(\mathbb{N})\to \mathscr{I}^{\looparrowright}_{\infty}(\mathbb{Z})$, where $\mathscr{I}^{\looparrowright}_{\infty}(\mathbb{Z})$ is a monoid of cofinite almost  monotone partial bijection of $\mathbb{Z}$ (see \cite{Gutik-Repovs-2012}). It is obvious that the so defined map $\mathfrak{i}_n\colon \mathscr{I}_{\infty}^{\,\Rsh\!\!\!\nearrow}(\mathbb{N})\to \mathscr{I}^{\looparrowright}_{\infty}(\mathbb{Z})$ is a homomorphism, and moreover in the case $n=0$ the map $\mathfrak{i}_0$ is a monoid homomorphism. Also, if $\alpha$ is an element of the semigroup $\mathscr{I}_{\infty}^{\!\nearrow}(\mathbb{N})$   of cofinite monotone partial
bijections of $\mathbb{N}$, then the above defined extension $\alpha_\mathbb{Z}\colon \mathbb{Z}\rightharpoonup\mathbb{Z}$ of $\alpha$ is a cofinite monotone partial bijection of $\mathbb{Z}$, and hence  $\alpha_\mathbb{Z}\in \mathscr{I}^{\!\nearrow}_{\infty}(\mathbb{Z})$, where $\mathscr{I}^{\!\nearrow}_{\infty}(\mathbb{Z})$ is a monoid of cofinite  monotone partial bijections of $\mathbb{Z}$ (see \cite{Gutik-Repovs-2012}).
\end{example}

\section{\textbf{On generators of the monoid $\mathbf{I}\mathbb{N}_{\infty}$}}\label{section-3}
In \cite{Bezushchak-2004} it is proved that the semigroup $\mathbf{ID}_{\infty}$ is finitely generated and moreover $\mathbf{ID}_{\infty}$ has three generators.
Taras Banakh posed the following question.

\begin{question}\label{question-3.0}
Is the monoid $\mathbf{I}\mathbb{N}_{\infty}$ finitely generated?
\end{question}

In this section we give a negative answer on this question.

\begin{lemma}\label{lemma-3.1}
If $A$  is a set of generators of the monoid $\mathbf{I}\mathbb{N}_{\infty}$, then $A$ contains at least two distinct elements of $\mathscr{C}_{\mathbb{N}}$.
\end{lemma}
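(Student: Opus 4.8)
The plan is to isolate two monotone numerical invariants and use them to force a ``total'' and an ``onto'' map into $A$, both of which are automatically elements of $\mathscr{C}_{\mathbb{N}}$. For $\gamma\in\mathbf{I}\mathbb{N}_{\infty}$ put $\delta(\gamma)=\left|\mathbb{N}\setminus\operatorname{dom}\gamma\right|$ and $\rho(\gamma)=\left|\mathbb{N}\setminus\operatorname{ran}\gamma\right|$. Since for every product $\gamma=\mu\nu$ in $\mathscr{I}_\omega$ one has $\operatorname{dom}\gamma\subseteq\operatorname{dom}\mu$ and $\operatorname{ran}\gamma\subseteq\operatorname{ran}\nu$, we get $\delta(\mu)\leq\delta(\gamma)$ and $\rho(\nu)\leq\rho(\gamma)$. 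I would first record the classification of the extreme maps. If $\gamma\in\mathbf{I}\mathbb{N}_{\infty}$ satisfies $\operatorname{dom}\gamma=\mathbb{N}$, then, inspecting three consecutive arguments, $\gamma$ is globally monotone; a decreasing direction would eventually force values below $1$, so $\gamma$ is the translation $n\mapsto n+s$ with $s=(1)\gamma-1\geq0$, i.e. $\gamma=\alpha^{s}\in\mathscr{C}_{\mathbb{N}}$. Applying this to $\gamma^{-1}$ shows dually that every $\gamma$ with $\operatorname{ran}\gamma=\mathbb{N}$ equals $\beta^{t}\in\mathscr{C}_{\mathbb{N}}$ for some $t\geq0$. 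Note that $\alpha^{s}$ with $s\geq1$ is not onto, while $\beta^{t}$ with $t\geq1$ is onto.

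Next I would use that $\alpha,\beta\in\mathscr{C}_{\mathbb{N}}\subseteq\mathbf{I}\mathbb{N}_{\infty}=\langle A\rangle$, where $\alpha,\beta$ are the generators of $\mathscr{C}_{\mathbb{N}}$ from Remark~\ref{remark-2.4}. Write $\alpha$ as a product of elements of $A$ and delete all factors equal to the identity $\mathbb{I}$; since $\alpha\neq\mathbb{I}$, the surviving word is non-empty, say $\alpha=c_1\cdots c_p$ with every $c_i\in A\setminus\{\mathbb{I}\}$. Then $\operatorname{dom}\alpha\subseteq\operatorname{dom}c_1$ gives $\delta(c_1)\leq\delta(\alpha)=0$, so $c_1$ is a non-identity total map, hence $c_1=\alpha^{s}$ for some $s\geq1$ by the classification above; in particular $c_1\in A\cap\mathscr{C}_{\mathbb{N}}$. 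Symmetrically, writing $\beta$ as a product over $A$, deleting identity factors to obtain a non-empty word $\beta=d_1\cdots d_q$ with $d_i\in A\setminus\{\mathbb{I}\}$, and using $\operatorname{ran}\beta\subseteq\operatorname{ran}d_q$, the last factor satisfies $\rho(d_q)\leq\rho(\beta)=0$, so $d_q$ is a non-identity onto map, whence $d_q=\beta^{t}$ for some $t\geq1$ and $d_q\in A\cap\mathscr{C}_{\mathbb{N}}$.

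It remains to see that $c_1\neq d_q$: the element $c_1=\alpha^{s}$ has $\operatorname{ran}c_1=\{n\in\mathbb{N}\colon n\geq s+1\}\neq\mathbb{N}$, whereas $\operatorname{ran}d_q=\mathbb{N}$, so the two maps are distinct. Thus $A$ contains the two distinct elements $c_1,d_q$ of $\mathscr{C}_{\mathbb{N}}$, which is the assertion of the lemma.

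The one point requiring care is keeping the forced factors away from the identity: a priori the ``total'' and ``onto'' factors produced by the invariants could both be $\mathbb{I}$, giving only one (trivial) element of $\mathscr{C}_{\mathbb{N}}$ instead of two. Deleting identity factors from words representing the non-identity elements $\alpha$ and $\beta$ is exactly what rules this out, after which the classification of total and onto isometries of $\mathbb{N}$ places the surviving extreme factors genuinely inside $\mathscr{C}_{\mathbb{N}}$.
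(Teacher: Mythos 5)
Your proof is correct and takes essentially the same approach as the paper's: write the generators $\alpha$ and $\beta$ of $\mathscr{C}_{\mathbb{N}}$ as words over $A$ with non-identity extreme factors, and use $\operatorname{dom}\alpha\subseteq\operatorname{dom}c_1$ and $\operatorname{ran}\beta\subseteq\operatorname{ran}d_q$ to force the first (resp.\ last) factor into $\mathscr{C}_{\mathbb{N}}$. The only difference is presentational: you prove the classification of total (resp.\ onto) partial isometries of $\mathbb{N}$ inline, where the paper cites Lemma~1 of \cite{Gutik-Savchuk-2018} (every element of $\mathbf{I}\mathbb{N}_{\infty}$ is a partial shift), and you spell out the distinctness $c_1\neq d_q$, which the paper leaves implicit.
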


\begin{proof}
Let $\alpha$ and $\beta$ be elements of a monoid $\mathscr{C}_{\mathbb{N}}$ which are defined in Remark~\ref{remark-2.4}. Then there exist finitely many $\alpha_1,\ldots,\alpha_k\in A$ such that $\alpha=\alpha_1\ldots\alpha_k$ and $\alpha_1\neq\mathbb{I}$. Since $\operatorname{dom}\alpha=\mathbb{N}$, the definition of the composition of partial maps implies that $\operatorname{dom}\alpha\subseteq \operatorname{dom}\alpha_1$. By Lemma~1 of \cite{Gutik-Savchuk-2018}, every element of $\mathbf{I}\mathbb{N}_{\infty}$ is a partial shift of $\mathbb{N}$, and hence we get that $\operatorname{dom}\alpha_1=\mathbb{N}$. By Lemma~1 of \cite{Gutik-Savchuk-2018} and Remark~\ref{remark-2.4}, we have that $\alpha_1\in \mathscr{C}_{\mathbb{N}}$. If $\beta=\beta_1\ldots\beta_j$ for some $\beta_1,\ldots,\beta_j\in A$ and $\beta_j\neq\mathbb{I}$, then dually we get that $\beta_j\in\mathscr{C}_{\mathbb{N}}$ with $\operatorname{ran}\beta_j=\mathbb{N}$. This implies the statement of the lemma.
\end{proof}

\begin{remark}
We observe that the set $A_0=\{\alpha,\beta\}$  is not a unique set of generators of the monoid $\mathscr{C}_{\mathbb{N}}$. It is obvious that for any positive integer $n\geqslant 2$ any of the following sets $A_n=\{\alpha^n,\beta\}$ and $B_n=\{\alpha,\beta^n\}$ generates the monoid $\mathscr{C}_{\mathbb{N}}$.
\end{remark}

Next we  need some notions defined in \cite{Gutik-Savchuk-2018} and \cite{Gutik-Savchuk-2019}.
For an arbitrary positive integer $n_0$ we denote
\begin{equation*}
[n_0)=\left\{n\in\mathbb{N}\colon n\geqslant n_0\right\}.
\end{equation*}
Since the set of all positive integers is well ordered, the definition of the semigroup $\mathscr{I}_{\infty}^{\,\Rsh\!\!\!\nearrow}(\mathbb{N})$ implies that for every $\gamma\in\mathscr{I}_{\infty}^{\,\Rsh\!\!\!\nearrow}(\mathbb{N})$ there exists the smallest positive integer $n_{\gamma}^{\mathbf{d}}\in\operatorname{dom}\gamma$ such that the restriction $\gamma|_{\left[n_{\gamma}^{\mathbf{d}}\right)}$ of the partial map $\gamma\colon \mathbb{N}\rightharpoonup \mathbb{N}$ onto the set $\left[n_{\gamma}^{\mathbf{d}}\right)$ is an element of the semigroup $\mathscr{C}_{\mathbb{N}}$, i.e., $\gamma|_{\left[n_{\gamma}^{\mathbf{d}}\right)}$ is a some shift of $\left[n_{\gamma}^{\mathbf{d}}\right)$. For every $\gamma\in\mathscr{I}_{\infty}^{\,\Rsh\!\!\!\nearrow}(\mathbb{N})$ we put $\overrightarrow{\gamma}=\gamma|_{\left[n_{\gamma}^{\mathbf{d}}\right)}$, i.e.
\begin{equation*}
\operatorname{dom}\overrightarrow{\gamma}=\left[n_{\gamma}^{\mathbf{d}}\right), \qquad (x)\overrightarrow{\gamma}=(x)\gamma \quad \hbox{for all} \quad x\in \operatorname{dom}\overrightarrow{\gamma} \qquad \hbox{and} \qquad \operatorname{ran}\overrightarrow{\gamma}=\left(\operatorname{dom}\overrightarrow{\gamma}\right)\gamma.
\end{equation*}
Also, we put
\begin{equation*}
\underline{n}_{\gamma}^{\mathbf{d}}=\min\operatorname{dom}\gamma \qquad \hbox{for} \quad \gamma\in\mathscr{I}_{\infty}^{\,\Rsh\!\!\!\nearrow}(\mathbb{N}),
\end{equation*}
It is obvious that $\underline{n}_{\gamma}^{\mathbf{d}}= n_{\gamma}^{\mathbf{d}}$ when $\gamma\in\mathscr{C}_{\mathbb{N}}$ and $\underline{n}_{\gamma}^{\mathbf{d}}< n_{\gamma}^{\mathbf{d}}$ when $\gamma\in\mathscr{I}_{\infty}^{\,\Rsh\!\!\!\nearrow}(\mathbb{N})\setminus\mathscr{C}_{\mathbb{N}}$. Also for any $\gamma\in\mathbf{I}\mathbb{N}_{\infty}$ we denote
\begin{equation*}
  \underline{n}_{\gamma}^{\mathbf{r}}=(\underline{n}_{\gamma}^{\mathbf{d}})\gamma \qquad \hbox{and} \qquad n_{\gamma}^{\mathbf{r}}=(n_{\gamma}^{\mathbf{d}})\gamma.
\end{equation*}

By Lemma~1 of \cite{Gutik-Savchuk-2018} every element of the monoid $\mathbf{I}\mathbb{N}_{\infty}$ is a partial shift of the set $\mathbb{Z}$. This implies the following lemma.

\begin{lemma}\label{lemma-3.3}
For every element $\gamma$ of the monoid $\mathbf{I}\mathbb{N}_{\infty}$ the following equality holds:
\begin{equation*}
  n_{\gamma}^{\mathbf{r}}-\underline{n}_{\gamma}^{\mathbf{r}}=n_{\gamma}^{\mathbf{d}}-\underline{n}_{\gamma}^{\mathbf{d}}.
\end{equation*}
\end{lemma}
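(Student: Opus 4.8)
The plan is to read the claimed identity directly off the partial-shift description of the elements of $\mathbf{I}\mathbb{N}_{\infty}$ recorded immediately above the statement. By Lemma~1 of \cite{Gutik-Savchuk-2018}, the fixed element $\gamma$ is a partial shift of $\mathbb{Z}$; that is, there exists an integer $c_\gamma$ with $(x)\gamma=x+c_\gamma$ for every $x\in\operatorname{dom}\gamma$. The entire argument rests on the observation that this single constant $c_\gamma$ is valid uniformly across the whole of $\operatorname{dom}\gamma$, so that any two points of the domain are translated by the same amount.

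First I would check that the two distinguished points $\underline{n}_{\gamma}^{\mathbf{d}}$ and $n_{\gamma}^{\mathbf{d}}$ both lie in $\operatorname{dom}\gamma$, so that the shift formula may legitimately be applied to each. For $\underline{n}_{\gamma}^{\mathbf{d}}=\min\operatorname{dom}\gamma$ this is immediate, and for $n_{\gamma}^{\mathbf{d}}$ it follows from its definition as the least point from which the restriction $\overrightarrow{\gamma}=\gamma|_{\left[n_{\gamma}^{\mathbf{d}}\right)}$ belongs to $\mathscr{C}_{\mathbb{N}}$, which in particular forces $n_{\gamma}^{\mathbf{d}}\in\operatorname{dom}\gamma$. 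Applying $(x)\gamma=x+c_\gamma$ to each of these points, and recalling the definitions $\underline{n}_{\gamma}^{\mathbf{r}}=(\underline{n}_{\gamma}^{\mathbf{d}})\gamma$ and $n_{\gamma}^{\mathbf{r}}=(n_{\gamma}^{\mathbf{d}})\gamma$, I would obtain $\underline{n}_{\gamma}^{\mathbf{r}}=\underline{n}_{\gamma}^{\mathbf{d}}+c_\gamma$ and $n_{\gamma}^{\mathbf{r}}=n_{\gamma}^{\mathbf{d}}+c_\gamma$. Subtracting these two equalities cancels the common constant $c_\gamma$ and yields the desired identity $n_{\gamma}^{\mathbf{r}}-\underline{n}_{\gamma}^{\mathbf{r}}=n_{\gamma}^{\mathbf{d}}-\underline{n}_{\gamma}^{\mathbf{d}}$.

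I do not anticipate a genuine obstacle, but the single point deserving care is precisely where the hypothesis $\gamma\in\mathbf{I}\mathbb{N}_{\infty}$ enters, rather than mere membership in the larger almost-monotone semigroup $\mathscr{I}_{\infty}^{\,\Rsh\!\!\!\nearrow}(\mathbb{N})$: namely, the uniformity of the shift constant. A generic element of $\mathscr{I}_{\infty}^{\,\Rsh\!\!\!\nearrow}(\mathbb{N})$ need only act as a shift on its tail $\left[n_{\gamma}^{\mathbf{d}}\right)$, and may well translate the finitely many smaller points of its domain---among them $\underline{n}_{\gamma}^{\mathbf{d}}$---by a different amount, in which case the cancellation above would fail. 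It is exactly the isometry condition defining $\mathbf{I}\mathbb{N}_{\infty}$ that upgrades the tail shift to a single global shift on all of $\operatorname{dom}\gamma$; hence the proof must invoke the partial-shift lemma for $\mathbf{I}\mathbb{N}_{\infty}$ and not merely almost-monotonicity.
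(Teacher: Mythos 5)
Your proof is correct and is exactly the paper's argument: the paper derives this lemma immediately from Lemma~1 of \cite{Gutik-Savchuk-2018} (every element of $\mathbf{I}\mathbb{N}_{\infty}$ is a partial shift of $\mathbb{Z}$), which is precisely the uniform-constant cancellation you spell out. Your closing remark correctly identifies why the isometry hypothesis, and not mere almost-monotonicity, is what makes the argument work.
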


\begin{lemma}\label{lemma-3.4}
Let be $\gamma\in \mathscr{C}_{\mathbb{N}}$ and $\delta\in\mathbf{I}\mathbb{N}_{\infty}$. Then
\begin{equation*}
  n_{\gamma\delta}^{\mathbf{d}}-\underline{n}_{\gamma\delta}^{\mathbf{d}}\leqslant n_{\delta}^{\mathbf{d}}-\underline{n}_{\delta}^{\mathbf{d}}.
\end{equation*}
\end{lemma}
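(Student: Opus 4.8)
The plan is to exploit the fact, recalled immediately before the statement, that every element of $\mathbf{I}\mathbb{N}_{\infty}$ is a partial shift of $\mathbb{Z}$ (Lemma~1 of \cite{Gutik-Savchuk-2018}). Hence I would write $(x)\gamma=x+c$ for all $x\in\operatorname{dom}\gamma=\left[n_{\gamma}^{\mathbf{d}}\right)$ and a fixed $c\in\mathbb{Z}$ (here $n_{\gamma}^{\mathbf{d}}=\underline{n}_{\gamma}^{\mathbf{d}}$, since $\gamma\in\mathscr{C}_{\mathbb{N}}$), and $(y)\delta=y+e$ for all $y\in\operatorname{dom}\delta$ and a fixed $e\in\mathbb{Z}$. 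The first step is the elementary reformulation: for any $\mu\in\mathbf{I}\mathbb{N}_{\infty}$ the number $n_{\mu}^{\mathbf{d}}$ is the least $n$ with $\left[n\right)\subseteq\operatorname{dom}\mu$, so that $n_{\mu}^{\mathbf{d}}=1+\max\left(\mathbb{N}\setminus\operatorname{dom}\mu\right)$ (with the convention $\max\emptyset=0$), while $\underline{n}_{\mu}^{\mathbf{d}}=\min\operatorname{dom}\mu$.

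The second step is to describe the domain of the product. Since $x\in\operatorname{dom}(\gamma\delta)$ if and only if $x\geqslant n_{\gamma}^{\mathbf{d}}$ and $x+c\in\operatorname{dom}\delta$, we have $\operatorname{dom}(\gamma\delta)=\left\{x\geqslant n_{\gamma}^{\mathbf{d}}\colon x+c\in\operatorname{dom}\delta\right\}$. From here I would read off two bounds. If $x\geqslant\max\left\{n_{\gamma}^{\mathbf{d}},n_{\delta}^{\mathbf{d}}-c\right\}$, then $x+c\geqslant n_{\delta}^{\mathbf{d}}$ lies strictly above every point of $\mathbb{N}\setminus\operatorname{dom}\delta$ and satisfies $x+c\geqslant 1$, whence $x+c\in\operatorname{dom}\delta$; thus $\left[\max\left\{n_{\gamma}^{\mathbf{d}},n_{\delta}^{\mathbf{d}}-c\right\}\right)\subseteq\operatorname{dom}(\gamma\delta)$, which gives
\begin{equation*}
n_{\gamma\delta}^{\mathbf{d}}\leqslant\max\left\{n_{\gamma}^{\mathbf{d}},\,n_{\delta}^{\mathbf{d}}-c\right\}.
\end{equation*}
Conversely, every $x\in\operatorname{dom}(\gamma\delta)$ satisfies $x\geqslant n_{\gamma}^{\mathbf{d}}$ and $x+c\geqslant\min\operatorname{dom}\delta=\underline{n}_{\delta}^{\mathbf{d}}$, and taking the minimum over such $x$ yields
\begin{equation*}
\underline{n}_{\gamma\delta}^{\mathbf{d}}\geqslant\max\left\{n_{\gamma}^{\mathbf{d}},\,\underline{n}_{\delta}^{\mathbf{d}}-c\right\}.
\end{equation*}

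The final step is to combine the two estimates and reduce to a purely numerical inequality. Subtracting gives
\begin{equation*}
n_{\gamma\delta}^{\mathbf{d}}-\underline{n}_{\gamma\delta}^{\mathbf{d}}\leqslant\max\left\{n_{\gamma}^{\mathbf{d}},\,n_{\delta}^{\mathbf{d}}-c\right\}-\max\left\{n_{\gamma}^{\mathbf{d}},\,\underline{n}_{\delta}^{\mathbf{d}}-c\right\},
\end{equation*}
so it suffices to establish $\max\{k,a\}-\max\{k,b\}\leqslant a-b$ whenever $a\geqslant b$, which I would verify by splitting into the three cases $b\geqslant k$, $a\geqslant k>b$, and $k>a$. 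Applying this with $k=n_{\gamma}^{\mathbf{d}}$ and $a=n_{\delta}^{\mathbf{d}}-c\geqslant\underline{n}_{\delta}^{\mathbf{d}}-c=b$ (the inequality $a\geqslant b$ being the fact $\underline{n}_{\delta}^{\mathbf{d}}\leqslant n_{\delta}^{\mathbf{d}}$ recorded above) bounds the right-hand side by $(n_{\delta}^{\mathbf{d}}-c)-(\underline{n}_{\delta}^{\mathbf{d}}-c)=n_{\delta}^{\mathbf{d}}-\underline{n}_{\delta}^{\mathbf{d}}$, which is exactly the claimed bound.

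The step I expect to require the most care is the pair of endpoint estimates, in particular the degenerate regimes where $\operatorname{dom}\delta=\mathbb{N}$ or where $n_{\delta}^{\mathbf{d}}-c\leqslant n_{\gamma}^{\mathbf{d}}$; in the latter case the product already has full tail $\left[n_{\gamma}^{\mathbf{d}}\right)$ and $n_{\gamma\delta}^{\mathbf{d}}-\underline{n}_{\gamma\delta}^{\mathbf{d}}=0$. The $\max$-formulation is chosen precisely so that these cases are absorbed uniformly, and the only genuinely non-formal point is the claim that $x+c\in\operatorname{dom}\delta$ for $x$ in the tail, which rests on $n_{\delta}^{\mathbf{d}}$ being exactly one more than the largest gap of $\delta$.
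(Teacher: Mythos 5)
Your proof is correct, and it takes a genuinely different route from the paper's. The paper argues on the \emph{range} side: after disposing of the degenerate cases $\delta\in\mathscr{C}_{\mathbb{N}}$ and $\gamma\delta\in\mathscr{C}_{\mathbb{N}}$ (where the left-hand side vanishes), it uses the fact that $\operatorname{ran}\gamma$ is the tail $\left[n_{\gamma}^{\mathbf{r}}\right)$, so left-composition with $\gamma$ can only remove elements of $\operatorname{ran}\delta$ lying below that tail; splitting further into the sub-cases $n_{\gamma}^{\mathbf{r}}\leqslant\underline{n}_{\delta}^{\mathbf{d}}$ and $\underline{n}_{\delta}^{\mathbf{d}}<n_{\gamma}^{\mathbf{r}}<n_{\delta}^{\mathbf{d}}-1$, it concludes that the tail-start of the range is preserved while $\underline{n}_{\gamma\delta}^{\mathbf{r}}\geqslant\underline{n}_{\delta}^{\mathbf{r}}$, and then Lemma~\ref{lemma-3.3} converts the resulting inequality for ranges into the stated inequality for domains. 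You instead stay on the domain side throughout: writing the shift constant $c$ explicitly, you bound both endpoints at once by $n_{\gamma\delta}^{\mathbf{d}}\leqslant\max\left\{n_{\gamma}^{\mathbf{d}},\,n_{\delta}^{\mathbf{d}}-c\right\}$ and $\underline{n}_{\gamma\delta}^{\mathbf{d}}\geqslant\max\left\{n_{\gamma}^{\mathbf{d}},\,\underline{n}_{\delta}^{\mathbf{d}}-c\right\}$, and close with the elementary inequality $\max\{k,a\}-\max\{k,b\}\leqslant a-b$ for $a\geqslant b$. The gain of your version is uniformity: the $\max$-formulation absorbs every case the paper treats separately, and you need neither Lemma~\ref{lemma-3.3} nor any range quantities, so the argument is shorter and less exposed to the index bookkeeping that the paper's case analysis requires. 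What the paper's route buys is consistency across the section: the same ``composition with an element of $\mathscr{C}_{\mathbb{N}}$ only truncates from below'' mechanism, phrased via the $\mathbf{r}$- and $\mathbf{d}$-quantities together with Lemma~\ref{lemma-3.3}, is reused in dual form in Lemmas~\ref{lemma-3.5} and~\ref{lemma-3.6}. One hypothesis your argument genuinely uses, and which you correctly flag, is that every element of $\mathbf{I}\mathbb{N}_{\infty}$ is a partial shift (Lemma~1 of the cited paper); this is exactly what legitimizes the identification $n_{\mu}^{\mathbf{d}}=1+\max\left(\mathbb{N}\setminus\operatorname{dom}\mu\right)$, since for a partial shift the restriction to any tail contained in $\operatorname{dom}\mu$ automatically belongs to $\mathscr{C}_{\mathbb{N}}$.
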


\begin{proof}
If $\delta\in \mathscr{C}_{\mathbb{N}}$, then $\gamma\delta\in \mathscr{C}_{\mathbb{N}}$ and hence we have that
$
  n_{\gamma\delta}^{\mathbf{d}}=\underline{n}_{\gamma\delta}^{\mathbf{d}}
$
which implies that
\begin{equation*}
n_{\gamma\delta}^{\mathbf{d}}-\underline{n}_{\gamma\delta}^{\mathbf{d}}= n_{\delta}^{\mathbf{d}}-\underline{n}_{\delta}^{\mathbf{d}}=0.
\end{equation*}

Next we assume that $\delta,\gamma\delta\in \mathbf{I}\mathbb{N}_{\infty}\setminus \mathscr{C}_{\mathbb{N}}$, because in the case when $\gamma\delta\in \mathscr{C}_{\mathbb{N}}$ the above argument implies the require inequality.  Since $\gamma\delta\in \mathbf{I}\mathbb{N}_{\infty}\setminus \mathscr{C}_{\mathbb{N}}$, we get that $n_{\gamma}^{\mathbf{r}}<n_{\delta}^{\mathbf{d}}-1$. It is obvious that if $n_{\gamma}^{\mathbf{r}}\leqslant \underline{n}_{\delta}^{\mathbf{d}}$ then $n_{\gamma\delta}^{\mathbf{r}}=n_{\gamma}^{\mathbf{r}}$ and $\underline{n}_{\gamma\delta}^{\mathbf{r}}=\underline{n}_{\delta}^{\mathbf{r}}$. If $\underline{n}_{\delta}^{\mathbf{d}}<n_{\gamma}^{\mathbf{r}}<n_{\delta}^{\mathbf{d}}-1$ then $n_{\gamma\delta}^{\mathbf{r}}=n_{\gamma}^{\mathbf{r}}$ and $\underline{n}_{\gamma\delta}^{\mathbf{r}}\geqslant\underline{n}_{\delta}^{\mathbf{r}}$. By Lemma~\ref{lemma-3.3} in the both above cases we have that $n_{\gamma\delta}^{\mathbf{d}}-\underline{n}_{\gamma\delta}^{\mathbf{d}}\leqslant n_{\delta}^{\mathbf{d}}-\underline{n}_{\delta}^{\mathbf{d}}.$
\end{proof}

\begin{lemma}\label{lemma-3.5}
Let be $\gamma\in \mathscr{C}_{\mathbb{N}}$ and $\delta\in\mathbf{I}\mathbb{N}_{\infty}$. Then
\begin{equation*}
  n_{\delta\gamma}^{\mathbf{d}}-\underline{n}_{\delta\gamma}^{\mathbf{d}}\leqslant n_{\delta}^{\mathbf{d}}-\underline{n}_{\delta}^{\mathbf{d}}.
\end{equation*}
\end{lemma}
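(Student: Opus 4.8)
The plan is to deduce Lemma~\ref{lemma-3.5} from its already established left-handed counterpart Lemma~\ref{lemma-3.4} by passing to inverses. Since $\mathbf{I}\mathbb{N}_{\infty}$ is an inverse monoid and $\mathscr{C}_{\mathbb{N}}$ is an inverse submonoid (being isomorphic to the bicyclic monoid), from $\gamma\in\mathscr{C}_{\mathbb{N}}$ and $\delta\in\mathbf{I}\mathbb{N}_{\infty}$ we obtain $\gamma^{-1}\in\mathscr{C}_{\mathbb{N}}$, $\delta^{-1}\in\mathbf{I}\mathbb{N}_{\infty}$ and $(\delta\gamma)^{-1}=\gamma^{-1}\delta^{-1}$. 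Applying Lemma~\ref{lemma-3.4} to the pair $(\gamma^{-1},\delta^{-1})$ gives
\begin{equation*}
n_{\gamma^{-1}\delta^{-1}}^{\mathbf{d}}-\underline{n}_{\gamma^{-1}\delta^{-1}}^{\mathbf{d}}\leqslant n_{\delta^{-1}}^{\mathbf{d}}-\underline{n}_{\delta^{-1}}^{\mathbf{d}},
\end{equation*}
so it remains only to rewrite both sides in terms of $\delta\gamma$ and $\delta$ themselves.

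To that end, the key step is to prove the \emph{inverse-invariance} of the quantity $n_{\mu}^{\mathbf{d}}-\underline{n}_{\mu}^{\mathbf{d}}$, namely that $n_{\mu^{-1}}^{\mathbf{d}}-\underline{n}_{\mu^{-1}}^{\mathbf{d}}=n_{\mu}^{\mathbf{d}}-\underline{n}_{\mu}^{\mathbf{d}}$ for every $\mu\in\mathbf{I}\mathbb{N}_{\infty}$. Since $\operatorname{dom}\mu^{-1}=\operatorname{ran}\mu$, one immediately has $\underline{n}_{\mu^{-1}}^{\mathbf{d}}=\min\operatorname{ran}\mu=\underline{n}_{\mu}^{\mathbf{r}}$, and because $\mu$ restricted to the tail $\left[n_{\mu}^{\mathbf{d}}\right)$ is a shift, $\mu^{-1}$ is the inverse shift on $\left[n_{\mu}^{\mathbf{r}}\right)$, whence $n_{\mu^{-1}}^{\mathbf{d}}\leqslant n_{\mu}^{\mathbf{r}}$. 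The reverse inequality $n_{\mu^{-1}}^{\mathbf{d}}\geqslant n_{\mu}^{\mathbf{r}}$ carries the real content: I would first observe that $n_{\mu}^{\mathbf{d}}-1\notin\operatorname{dom}\mu$. Indeed, if $n_{\mu}^{\mathbf{d}}-1\in\operatorname{dom}\mu$, then by the isometry property its image is $n_{\mu}^{\mathbf{r}}\pm 1$; the value $n_{\mu}^{\mathbf{r}}+1$ is excluded since it is already the image of $n_{\mu}^{\mathbf{d}}+1$ (injectivity), while the value $n_{\mu}^{\mathbf{r}}-1$ would extend the shift to $\left[n_{\mu}^{\mathbf{d}}-1\right)$, contradicting the minimality of $n_{\mu}^{\mathbf{d}}$. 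Consequently $\mu^{-1}$ cannot be a shift on the larger tail $\left[n_{\mu}^{\mathbf{r}}-1\right)$, so $n_{\mu^{-1}}^{\mathbf{d}}=n_{\mu}^{\mathbf{r}}$. Combining this with Lemma~\ref{lemma-3.3} yields
\begin{equation*}
n_{\mu^{-1}}^{\mathbf{d}}-\underline{n}_{\mu^{-1}}^{\mathbf{d}}=n_{\mu}^{\mathbf{r}}-\underline{n}_{\mu}^{\mathbf{r}}=n_{\mu}^{\mathbf{d}}-\underline{n}_{\mu}^{\mathbf{d}}.
\end{equation*}

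Applying the inverse-invariance to $\mu=\delta\gamma$ on the left-hand side and to $\mu=\delta$ on the right-hand side of the inequality obtained from Lemma~\ref{lemma-3.4} transforms it into exactly $n_{\delta\gamma}^{\mathbf{d}}-\underline{n}_{\delta\gamma}^{\mathbf{d}}\leqslant n_{\delta}^{\mathbf{d}}-\underline{n}_{\delta}^{\mathbf{d}}$, which is the assertion. I expect the main obstacle to lie in the inverse-invariance step, and within it in the ``gap'' observation $n_{\mu}^{\mathbf{d}}-1\notin\operatorname{dom}\mu$; everything else is bookkeeping. As an alternative that avoids inverses, one could argue directly, mirroring the proof of Lemma~\ref{lemma-3.4}: right multiplication by $\gamma\in\mathscr{C}_{\mathbb{N}}$ deletes from $\operatorname{dom}\delta$ precisely those points whose $\delta$-image is smaller than $n_{\gamma}^{\mathbf{d}}$ and shifts the rest, and since the irregular part of $\operatorname{dom}\delta$ maps into the irregular part $\left[\underline{n}_{\delta}^{\mathbf{r}},n_{\delta}^{\mathbf{r}}\right)$ of $\operatorname{ran}\delta$, such deletions can only raise $\underline{n}^{\mathbf{d}}$ while leaving $n^{\mathbf{d}}$ unchanged (or collapse $\delta\gamma$ into $\mathscr{C}_{\mathbb{N}}$), which gives the inequality after a short case analysis on the position of $n_{\gamma}^{\mathbf{d}}$ relative to $\underline{n}_{\delta}^{\mathbf{r}}$ and $n_{\delta}^{\mathbf{r}}$.
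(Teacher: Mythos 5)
Your proof is correct, but it takes a genuinely different route from the paper's. The paper proves Lemma~\ref{lemma-3.5} directly, by an argument parallel to that of Lemma~\ref{lemma-3.4}: after reducing to the case $\delta,\delta\gamma\in\mathbf{I}\mathbb{N}_{\infty}\setminus\mathscr{C}_{\mathbb{N}}$, it distinguishes the cases $n_{\gamma}^{\mathbf{d}}\leqslant\underline{n}_{\delta}^{\mathbf{r}}$ and $\underline{n}_{\delta}^{\mathbf{r}}<n_{\gamma}^{\mathbf{d}}<n_{\delta}^{\mathbf{r}}-1$, exhibiting in the latter a point $i^\circ\in\operatorname{dom}\delta$ which forces $n_{\delta\gamma}^{\mathbf{d}}-\underline{n}_{\delta\gamma}^{\mathbf{d}}<n_{\delta}^{\mathbf{d}}-\underline{n}_{\delta}^{\mathbf{d}}$; this is essentially the ``alternative avoiding inverses'' you sketch in your closing sentences. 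Your main argument instead deduces the statement from Lemma~\ref{lemma-3.4} through the anti-isomorphism $\mu\mapsto\mu^{-1}$, at the price of a new observation: the defect $n_{\mu}^{\mathbf{d}}-\underline{n}_{\mu}^{\mathbf{d}}$ is inverse-invariant, which you correctly reduce to $\underline{n}_{\mu^{-1}}^{\mathbf{d}}=\underline{n}_{\mu}^{\mathbf{r}}$, $n_{\mu^{-1}}^{\mathbf{d}}=n_{\mu}^{\mathbf{r}}$ and Lemma~\ref{lemma-3.3}. Two details should be patched in a final write-up: the degenerate case $n_{\mu}^{\mathbf{d}}=1$ (there $n_{\mu}^{\mathbf{d}}-1\notin\mathbb{N}$, but then $\operatorname{dom}\mu^{-1}=\left[n_{\mu}^{\mathbf{r}}\right)$ and the equality $n_{\mu^{-1}}^{\mathbf{d}}=n_{\mu}^{\mathbf{r}}$ is trivial), and the implicit step from $n_{\mu}^{\mathbf{d}}-1\notin\operatorname{dom}\mu$ to $n_{\mu}^{\mathbf{r}}-1\notin\operatorname{ran}\mu=\operatorname{dom}\mu^{-1}$, which is immediate from Lemma~1 of \cite{Gutik-Savchuk-2018} (every element of $\mathbf{I}\mathbb{N}_{\infty}$ is a partial shift); that same lemma makes your isometry/injectivity argument for the gap unnecessary, since minimality of $n_{\mu}^{\mathbf{d}}$ alone then excludes $n_{\mu}^{\mathbf{d}}-1\in\operatorname{dom}\mu$. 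As for what each approach buys: the paper's proof is self-contained and stylistically symmetric with Lemma~\ref{lemma-3.4}, while yours halves the case analysis by exploiting the left--right duality of the inverse monoid, and the inverse-invariance of $n_{\mu}^{\mathbf{d}}-\underline{n}_{\mu}^{\mathbf{d}}$ is a reusable fact: for instance, it is exactly what is needed to see that the subsemigroups $\mathbf{I}\mathbb{N}_{\infty}^{\boldsymbol{g}[j]}$ introduced in Section~\ref{section-3} are closed under inversion, i.e., genuinely are \emph{inverse} subsemigroups, a point the paper asserts without proof.
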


\begin{proof}
By the first paragraph of the proof of Lemma~\ref{lemma-3.3} without loss of generality we may assume that $\delta,\delta\gamma\in \mathbf{I}\mathbb{N}_{\infty}\setminus \mathscr{C}_{\mathbb{N}}$. Since $\delta\gamma\in \mathbf{I}\mathbb{N}_{\infty}\setminus \mathscr{C}_{\mathbb{N}}$, we have that $n_{\gamma}^{\mathbf{d}}<n_{\delta}^{\mathbf{r}}-1$. It is obvious that if $n_{\gamma}^{\mathbf{d}}\leqslant\underline{n}_{\delta}^{\mathbf{r}}$ then $n_{\delta\gamma}^{\mathbf{d}}=n_{\gamma}^{\mathbf{d}}$ and $\underline{n}_{\delta\gamma}^{\mathbf{d}}=\underline{n}_{\delta}^{\mathbf{d}}$. If $\underline{n}_{\delta}^{\mathbf{r}}<n_{\gamma}^{\mathbf{d}}<n_{\delta}^{\mathbf{r}}-1$ then there exists a positive integer $i^\circ\in\operatorname{dom}\delta$ such that $(i^\circ)\delta\geqslant n_{\gamma}^{\mathbf{d}}$ and $(i^\circ)\delta\gamma=\underline{n}_{\delta\gamma}^{\mathbf{r}}$. Hence in this case we have that
$
n_{\delta\gamma}^{\mathbf{d}}-\underline{n}_{\delta\gamma}^{\mathbf{d}}\leqslant n_{\delta\gamma}^{\mathbf{d}}-i^\circ< n_{\delta}^{\mathbf{d}}-\underline{n}_{\delta}^{\mathbf{d}}.
$
\end{proof}

\begin{lemma}\label{lemma-3.6}
Let $k$ be a positive integer $\geqslant 2$. If $\gamma,\delta\in\mathbf{I}\mathbb{N}_{\infty}\setminus\mathscr{C}_{\mathbb{N}}$ such that $\gamma\delta\in\mathbf{I}\mathbb{N}_{\infty}\setminus\mathscr{C}_{\mathbb{N}}$, $n_{\gamma}^{\mathbf{d}}-\underline{n}_{\gamma}^{\mathbf{d}}\leqslant k$ and $n_{\delta}^{\mathbf{d}}-\underline{n}_{\delta}^{\mathbf{d}}\leqslant k$, then
\begin{equation*}
  n_{\gamma\delta}^{\mathbf{d}}-\underline{n}_{\gamma\delta}^{\mathbf{d}}\leqslant k.
\end{equation*}
\end{lemma}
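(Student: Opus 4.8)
The plan is to collapse the three domain-defects $n^{\mathbf{d}}-\underline{n}^{\mathbf{d}}$ occurring in the statement into a single computation with the intermediate set $\operatorname{ran}\gamma\cap\operatorname{dom}\delta$. By Lemma~1 of \cite{Gutik-Savchuk-2018} every element of $\mathbf{I}\mathbb{N}_{\infty}$ is the restriction of a single shift of $\mathbb{Z}$; write $c_{\gamma},c_{\delta}\in\mathbb{Z}$ for the shift constants, so that $(x)\gamma=x+c_{\gamma}$ for $x\in\operatorname{dom}\gamma$ and $(y)\delta=y+c_{\delta}$ for $y\in\operatorname{dom}\delta$. Then $\gamma\delta$ is a restriction of the shift by $c_{\gamma}+c_{\delta}$ and $\operatorname{ran}(\gamma\delta)=\left(\operatorname{ran}\gamma\cap\operatorname{dom}\delta\right)+c_{\delta}$. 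For a proper cofinite $S\subseteq\mathbb{N}$ let me write $\ell(S)$ for the least $m$ with $[m)\subseteq S$; then $\min\operatorname{ran}\alpha=\underline{n}_{\alpha}^{\mathbf{r}}$, $\ell(\operatorname{ran}\alpha)=n_{\alpha}^{\mathbf{r}}$, $\min\operatorname{dom}\alpha=\underline{n}_{\alpha}^{\mathbf{d}}$ and $\ell(\operatorname{dom}\alpha)=n_{\alpha}^{\mathbf{d}}$ for every $\alpha\in\mathbf{I}\mathbb{N}_{\infty}\setminus\mathscr{C}_{\mathbb{N}}$. Since a shift of $\mathbb{Z}$ is increasing, it carries $\min S$ and $\ell(S)$ to $\min(S+c)$ and $\ell(S+c)$, whence $\ell(S)-\min S=\ell(S+c)-\min(S+c)$. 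Applying this to $\gamma\delta$ itself and to the translation relating $\operatorname{ran}(\gamma\delta)$ to $\operatorname{ran}\gamma\cap\operatorname{dom}\delta$, together with Lemma~\ref{lemma-3.3}, I get $n_{\gamma\delta}^{\mathbf{d}}-\underline{n}_{\gamma\delta}^{\mathbf{d}}=n_{\gamma\delta}^{\mathbf{r}}-\underline{n}_{\gamma\delta}^{\mathbf{r}}=\ell(\operatorname{ran}\gamma\cap\operatorname{dom}\delta)-\min(\operatorname{ran}\gamma\cap\operatorname{dom}\delta)$.

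After this reduction the problem is purely set-theoretic. Put $P=\operatorname{ran}\gamma$ and $Q=\operatorname{dom}\delta$. The hypotheses $\gamma,\delta,\gamma\delta\in\mathbf{I}\mathbb{N}_{\infty}\setminus\mathscr{C}_{\mathbb{N}}$ say exactly that $P$, $Q$ and $P\cap Q$ are all proper cofinite subsets of $\mathbb{N}$, so that $\ell(P)$, $\ell(Q)$ and $\ell(P\cap Q)$ are defined. The key identity is $\mathbb{N}\setminus(P\cap Q)=(\mathbb{N}\setminus P)\cup(\mathbb{N}\setminus Q)$: the largest element missing from $P\cap Q$ is the larger of the largest elements missing from $P$ and from $Q$, so $\ell(P\cap Q)=\max\{\ell(P),\ell(Q)\}=\max\{n_{\gamma}^{\mathbf{r}},n_{\delta}^{\mathbf{d}}\}$. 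At the same time $P\cap Q\subseteq P$ and $P\cap Q\subseteq Q$ force $\min(P\cap Q)\geqslant\underline{n}_{\gamma}^{\mathbf{r}}$ and $\min(P\cap Q)\geqslant\underline{n}_{\delta}^{\mathbf{d}}$.

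Combining the two displays, $n_{\gamma\delta}^{\mathbf{d}}-\underline{n}_{\gamma\delta}^{\mathbf{d}}=\max\{n_{\gamma}^{\mathbf{r}},n_{\delta}^{\mathbf{d}}\}-\min(P\cap Q)$, and it remains to split on which set realises the maximum. If the maximum is $n_{\gamma}^{\mathbf{r}}$, then this is $\leqslant n_{\gamma}^{\mathbf{r}}-\underline{n}_{\gamma}^{\mathbf{r}}=n_{\gamma}^{\mathbf{d}}-\underline{n}_{\gamma}^{\mathbf{d}}\leqslant k$, the middle equality being Lemma~\ref{lemma-3.3}; if it is $n_{\delta}^{\mathbf{d}}$, then it is $\leqslant n_{\delta}^{\mathbf{d}}-\underline{n}_{\delta}^{\mathbf{d}}\leqslant k$. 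Either way $n_{\gamma\delta}^{\mathbf{d}}-\underline{n}_{\gamma\delta}^{\mathbf{d}}\leqslant k$, as required. In fact this computation yields the sharper bound $n_{\gamma\delta}^{\mathbf{d}}-\underline{n}_{\gamma\delta}^{\mathbf{d}}\leqslant\max\left\{n_{\gamma}^{\mathbf{d}}-\underline{n}_{\gamma}^{\mathbf{d}},\,n_{\delta}^{\mathbf{d}}-\underline{n}_{\delta}^{\mathbf{d}}\right\}$, so the assumption $k\geqslant 2$ is never used, and (handling the full-tail case trivially) the same argument also recovers Lemmas~\ref{lemma-3.4} and~\ref{lemma-3.5}.

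I do not anticipate a genuine obstacle. The only delicate point is the translation step in the first paragraph: one must verify that the defect $n^{\mathbf{d}}-\underline{n}^{\mathbf{d}}$ of each element transfers, via Lemma~\ref{lemma-3.3} and the single-shift description, to $\ell(\cdot)-\min(\cdot)$ of the \emph{correct} intermediate set $\operatorname{ran}\gamma\cap\operatorname{dom}\delta$ rather than of one of the two domains. Once this dictionary is in place, the set-theoretic inequality $\ell(P\cap Q)-\min(P\cap Q)\leqslant\max\{\ell(P)-\min P,\ell(Q)-\min Q\}$ is immediate, and the three hypotheses $\gamma,\delta,\gamma\delta\notin\mathscr{C}_{\mathbb{N}}$ serve only to guarantee that the three quantities $\ell(\cdot)$ exist.
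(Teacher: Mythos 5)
Your proof is correct, and it reaches the conclusion by a cleaner route than the paper's own argument. The paper starts from the same two ingredients you use, namely Lemma~\ref{lemma-3.3} and the fact (Lemma~1 of \cite{Gutik-Savchuk-2018}) that every element of $\mathbf{I}\mathbb{N}_{\infty}$ is the restriction of a single shift, so that the defect of $\gamma\delta$ can be computed on the range side; but it then splits into four cases according to the order relations between $\underline{n}_{\gamma}^{\mathbf{r}}$ and $\underline{n}_{\delta}^{\mathbf{d}}$ and between $n_{\gamma}^{\mathbf{r}}$ and $n_{\delta}^{\mathbf{d}}$, in each case identifying $n_{\gamma\delta}^{\mathbf{r}}$ and bounding $\underline{n}_{\gamma\delta}^{\mathbf{r}}$ from below by exhibiting a witness element ($i^\circ$, $j^\circ$ or $l^\circ$) of $\operatorname{ran}\gamma\cap\operatorname{dom}\delta$. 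Your reduction to the set-theoretic inequality $\ell(P\cap Q)-\min(P\cap Q)\leqslant\max\{\ell(P)-\min P,\,\ell(Q)-\min Q\}$ for $P=\operatorname{ran}\gamma$ and $Q=\operatorname{dom}\delta$ is precisely the content of that case analysis: the identity $\ell(P\cap Q)=\max\{\ell(P),\ell(Q)\}$ replaces the case-by-case identification of $n_{\gamma\delta}^{\mathbf{r}}$, and the bound $\min(P\cap Q)\geqslant\max\{\min P,\min Q\}$ replaces the witnesses. Your packaging buys several things: the four cases collapse to two, the hypothesis $k\geqslant 2$ is seen to be superfluous, the sharper bound $n_{\gamma\delta}^{\mathbf{d}}-\underline{n}_{\gamma\delta}^{\mathbf{d}}\leqslant\max\left\{n_{\gamma}^{\mathbf{d}}-\underline{n}_{\gamma}^{\mathbf{d}},\,n_{\delta}^{\mathbf{d}}-\underline{n}_{\delta}^{\mathbf{d}}\right\}$ comes out for free, and Lemmas~\ref{lemma-3.4} and~\ref{lemma-3.5} become corollaries of the same computation rather than separate arguments, which also shows directly that every $\mathbf{I}\mathbb{N}_{\infty}^{\boldsymbol{g}[j]}$ is closed under composition. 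One phrasing caveat: $\gamma\notin\mathscr{C}_{\mathbb{N}}$ says that $\operatorname{ran}\gamma$ is not a tail of $\mathbb{N}$, which is stronger than, not \emph{exactly} equivalent to, being a proper cofinite subset; since all your argument actually needs is that $\ell$ is defined (which holds for every cofinite set), this imprecision is harmless.
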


\begin{proof}
We consider all possible cases.

\textbf{1}. If $\underline{n}_{\gamma}^{\mathbf{r}}\leqslant\underline{n}_{\delta}^{\mathbf{d}}$ and $n_{\gamma}^{\mathbf{r}}\leqslant n_{\delta}^{\mathbf{d}}$, then $\underline{n}_{\delta}^{\mathbf{r}}\leqslant\underline{n}_{\gamma\delta}^{\mathbf{r}}<n_{\delta}^{\mathbf{r}}-1$ and $n_{\delta}^{\mathbf{r}}=n_{\gamma\delta}^{\mathbf{r}}$. Hence in this case by Lemma~1 of \cite{Gutik-Savchuk-2018} and Lemma~\ref{lemma-3.3} we have that
\begin{equation*}
  n_{\gamma\delta}^{\mathbf{d}}-\underline{n}_{\gamma\delta}^{\mathbf{d}}=n_{\gamma\delta}^{\mathbf{r}}-\underline{n}_{\gamma\delta}^{\mathbf{r}} \leqslant n_{\delta}^{\mathbf{r}}-\underline{n}_{\delta}^{\mathbf{r}}=n_{\delta}^{\mathbf{d}}-\underline{n}_{\delta}^{\mathbf{d}}  \leqslant k.
\end{equation*}

\textbf{2}. If $\underline{n}_{\gamma}^{\mathbf{r}}>\underline{n}_{\delta}^{\mathbf{d}}$ and $n_{\gamma}^{\mathbf{r}}\leqslant n_{\delta}^{\mathbf{d}}$, then $n_{\delta}^{\mathbf{r}}=n_{\gamma\delta}^{\mathbf{r}}$ and there exists a positive integer $i^\circ\in\operatorname{dom}\gamma$ such that $(i^\circ)\gamma>\underline{n}_{\delta}^{\mathbf{d}}$ and $(i^\circ)\gamma\delta=\underline{n}_{\gamma\delta}^{\mathbf{r}}$. In this case by Lemma~1 of \cite{Gutik-Savchuk-2018} and Lemma~\ref{lemma-3.3} we have that
\begin{equation*}
  n_{\gamma\delta}^{\mathbf{d}}-\underline{n}_{\gamma\delta}^{\mathbf{d}}=n_{\gamma\delta}^{\mathbf{r}}-\underline{n}_{\gamma\delta}^{\mathbf{r}} = n_{\delta}^{\mathbf{r}}-(i^\circ)\gamma\delta<
  n_{\delta}^{\mathbf{r}}-\underline{n}_{\delta}^{\mathbf{r}}=
  n_{\delta}^{\mathbf{d}}-\underline{n}_{\delta}^{\mathbf{d}} \leqslant k.
\end{equation*}

\textbf{3}. If $\underline{n}_{\gamma}^{\mathbf{r}}\leqslant\underline{n}_{\delta}^{\mathbf{d}}$ and $n_{\gamma}^{\mathbf{r}}> n_{\delta}^{\mathbf{d}}$, then $n_{\gamma\delta}^{\mathbf{r}}=(n_{\gamma}^{\mathbf{r}})\delta$ and there exists a positive integer $j^\circ\in\operatorname{ran}\gamma\cap\operatorname{dom}\delta$ such that $j^\circ\geqslant \underline{n}_{\delta}^{\mathbf{d}}$ and $(j^\circ)\delta=\underline{n}_{\gamma\delta}^{\mathbf{r}}$. In this case by Lemma~1 of \cite{Gutik-Savchuk-2018} and Lemma~\ref{lemma-3.3} we have that
\begin{equation*}
  n_{\gamma\delta}^{\mathbf{d}}-\underline{n}_{\gamma\delta}^{\mathbf{d}}=
  n_{\gamma\delta}^{\mathbf{r}}-\underline{n}_{\gamma\delta}^{\mathbf{r}} = (n_{\gamma}^{\mathbf{r}})\delta-(j^\circ)\delta=n_{\gamma}^{\mathbf{r}}-j^\circ\leqslant n_{\gamma}^{\mathbf{r}}-\underline{n}_{\gamma}^{\mathbf{r}}=
  n_{\gamma}^{\mathbf{d}}-\underline{n}_{\gamma}^{\mathbf{d}}\leqslant k.
\end{equation*}

\textbf{4}. If $\underline{n}_{\gamma}^{\mathbf{r}}>\underline{n}_{\delta}^{\mathbf{d}}$ and $n_{\gamma}^{\mathbf{r}}> n_{\delta}^{\mathbf{d}}$, then $n_{\gamma\delta}^{\mathbf{r}}=(n_{\gamma}^{\mathbf{r}})\delta$ and there exists a positive integer $l^\circ\in\operatorname{ran}\gamma\cap\operatorname{dom}\delta$ such that $l^\circ\geqslant \underline{n}_{\gamma}^{\mathbf{r}}$ and $(l^\circ)\delta=\underline{n}_{\gamma\gamma}^{\mathbf{r}}$. Hence in this case by Lemma~1 of \cite{Gutik-Savchuk-2018} and Lemma~\ref{lemma-3.3} we have that
\begin{equation*}
  n_{\gamma\delta}^{\mathbf{d}}-\underline{n}_{\gamma\delta}^{\mathbf{d}}=
  n_{\gamma\delta}^{\mathbf{r}}-\underline{n}_{\gamma\delta}^{\mathbf{r}} =
  (n_{\gamma}^{\mathbf{r}})\delta-(l^\circ)\delta=
  n_{\gamma}^{\mathbf{r}}-l^\circ\leqslant
  n_{\gamma}^{\mathbf{r}}-\underline{n}_{\gamma}^{\mathbf{r}}=
  n_{\gamma}^{\mathbf{d}}-\underline{n}_{\gamma}^{\mathbf{d}}\leqslant k.
\end{equation*}
This completes the proof of the lemma.
\end{proof}

\begin{theorem}\label{theorem-3.7}
The monoid  $\mathbf{I}\mathbb{N}_{\infty}$ is not  finitely generated.
\end{theorem}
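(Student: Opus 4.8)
The plan is to attach to each $\gamma\in\mathbf{I}\mathbb{N}_{\infty}$ the nonnegative integer $D(\gamma)=n_{\gamma}^{\mathbf{d}}-\underline{n}_{\gamma}^{\mathbf{d}}$, which measures how far $\gamma$ is from lying in $\mathscr{C}_{\mathbb{N}}$; indeed, by the remark preceding Lemma~\ref{lemma-3.3} one has $D(\gamma)=0$ precisely when $\gamma\in\mathscr{C}_{\mathbb{N}}$. The whole argument then rests on two facts: the value $D$ stays bounded along any product formed from a fixed finite set, while $\mathbf{I}\mathbb{N}_{\infty}$ contains elements whose $D$-value is arbitrarily large.

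First I would argue by contradiction. Assume $A$ is a finite generating set of $\mathbf{I}\mathbb{N}_{\infty}$ and put $k=\max\bigl(\{2\}\cup\{D(a):a\in A\}\bigr)$, which is finite because $A$ is finite. I would prove by induction on the length $m$ of a factorization $w=a_1\cdots a_m$ with $a_i\in A$ that $D(w)\leqslant k$. The base case $m=1$ holds by the choice of $k$. For the inductive step write $\gamma=a_1\cdots a_{m-1}$, so $D(\gamma)\leqslant k$ by hypothesis, and $\delta=a_m$, so $D(\delta)\leqslant k$, and split into four cases. If $\gamma\in\mathscr{C}_{\mathbb{N}}$, then Lemma~\ref{lemma-3.4} gives $D(\gamma\delta)\leqslant D(\delta)\leqslant k$. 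If $\delta\in\mathscr{C}_{\mathbb{N}}$, then Lemma~\ref{lemma-3.5} gives $D(\gamma\delta)\leqslant D(\gamma)\leqslant k$. If both factors lie outside $\mathscr{C}_{\mathbb{N}}$ but $\gamma\delta\in\mathscr{C}_{\mathbb{N}}$, then $D(\gamma\delta)=0\leqslant k$. In the remaining case, where $\gamma,\delta$ and $\gamma\delta$ all lie outside $\mathscr{C}_{\mathbb{N}}$, Lemma~\ref{lemma-3.6} applies exactly because $k\geqslant 2$ and yields $D(\gamma\delta)\leqslant k$. Hence every element of $\langle A\rangle=\mathbf{I}\mathbb{N}_{\infty}$ satisfies $D\leqslant k$.

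To contradict this I would exhibit, for each positive integer $N$, the partial isometry $\gamma_N\in\mathbf{I}\mathbb{N}_{\infty}$ equal to the identity map on $\mathbb{N}\setminus\{N\}$; this is a monotone cofinite partial isometry, hence an element of $\mathbf{I}\mathbb{N}_{\infty}$ not belonging to $\mathscr{C}_{\mathbb{N}}$. Its least domain point is $\underline{n}_{\gamma_N}^{\mathbf{d}}=1$, while $n_{\gamma_N}^{\mathbf{d}}=N+1$, since $N+1$ is the smallest $n$ for which $[n)\subseteq\operatorname{dom}\gamma_N$ and the restriction $\gamma_N|_{[n)}$ is the identity shift on $[n)$, and thus lies in $\mathscr{C}_{\mathbb{N}}$. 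Therefore $D(\gamma_N)=N$. Choosing $N>k$ produces an element of $\mathbf{I}\mathbb{N}_{\infty}$ with $D$-value exceeding $k$, contradicting the bound established in the previous paragraph, so no finite generating set can exist.

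I expect the only real work to be the bookkeeping of the four-way case analysis, since the three inequalities it uses are precisely Lemmas~\ref{lemma-3.4}--\ref{lemma-3.6}. The crucial point — and the reason $k$ must be taken $\geqslant 2$ — is the last case, in which neither factor belongs to $\mathscr{C}_{\mathbb{N}}$ and Lemma~\ref{lemma-3.6} is the only instrument that prevents $D$ from growing; the verification that each $\gamma_N$ is a genuine cofinite partial isometry outside $\mathscr{C}_{\mathbb{N}}$ with $D(\gamma_N)=N$ is routine.
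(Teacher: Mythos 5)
Your proposal is correct and takes essentially the same approach as the paper: both arguments bound the invariant $D(\gamma)=n_{\gamma}^{\mathbf{d}}-\underline{n}_{\gamma}^{\mathbf{d}}$ along arbitrary products from a finite generating set by means of Lemmas~\ref{lemma-3.4}, \ref{lemma-3.5} and \ref{lemma-3.6}, and then contradict this bound with an identity map on a cofinite set having a large gap (the paper deletes the block $\{2,\ldots,k+1\}$, you delete the single point $\{N\}$ with $N>k$; both give $D>k$). Your write-up merely makes explicit the induction on word length that the paper leaves implicit, and sidesteps Lemma~\ref{lemma-3.1} by taking the maximum of $D$ over all generators, which is harmless since $D$ vanishes exactly on $\mathscr{C}_{\mathbb{N}}$.
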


\begin{proof}
Suppose to the contrary that there exists a finite set $A=\left\{\gamma_1,\ldots,\gamma_p\right\}$ of generators of $\mathbf{I}\mathbb{N}_{\infty}$. Lemma~\ref{lemma-3.1} implies that $p\geqslant 3$ and without loss of generality we may assume that $\gamma_1,\gamma_2\in\mathscr{C}_{\mathbb{N}}$ and $\gamma_3,\ldots,\gamma_p\in\mathbf{I}\mathbb{N}_{\infty}\setminus\mathscr{C}_{\mathbb{N}}$. Since the set $A\setminus\left\{\gamma_1,\gamma_2\right\}=\left\{\gamma_3,\ldots,\gamma_p\right\}$ is finite and $\gamma_3,\ldots,\gamma_p\in\mathbf{I}\mathbb{N}_{\infty}\setminus\mathscr{C}_{\mathbb{N}}$, there exists a positive integer $k\geqslant 2$ such that $n_{\gamma_j}^{\mathbf{d}}-\underline{n}_{\gamma_j}^{\mathbf{d}}\leqslant k$ for any $j=3,\ldots,p$.

Since $A$ generates the monoid $\mathbf{I}\mathbb{N}_{\infty}$, Lemmas~\ref{lemma-3.3}, \ref{lemma-3.4}, \ref{lemma-3.5}, and \ref{lemma-3.6} imply that $n_{\gamma}^{\mathbf{d}}-\underline{n}_{\gamma}^{\mathbf{d}}\leqslant k$  for any $\gamma\in \mathbf{I}\mathbb{N}_{\infty}$. Let $\varepsilon^*$ be the identity map of the set $\{1\}\cup\{s\in\mathbb{N}\colon s\geqslant k+2\}$. It is obvious that
\begin{equation*}
  n_{\varepsilon^*}^{\mathbf{d}}-\underline{n}_{\varepsilon^*}^{\mathbf{d}}=k+2-1=k+1,
\end{equation*}
which contradicts the above part of the proof. The obtained contradiction implies the statement of the theorem.
\end{proof}

In the following example we construct a set of generators of the monoid $\mathbf{I}\mathbb{N}_{\infty}$.

\begin{example}\label{example-3.8}
Let $\alpha$ and $\beta$ be elements of the submonoid $\mathscr{C}_{\mathbb{N}}$ of $\mathbf{I}\mathbb{N}_{\infty}$ which are  described in Remark~\ref{remark-2.4}. For every positive integer $k\geqslant 2$ we put $\varepsilon^{[k]}$ to be the identity map of the set $\mathbb{N}\setminus \{k\}$. It is obvious that $\varepsilon^{[k]}$ is an idempotent of $\mathbf{I}\mathbb{N}_{\infty}$ and $\varepsilon^{[k]}\notin \mathscr{C}_{\mathbb{N}}$ for all positive integers $k\geqslant 2$. We claim that the set
\begin{equation*}
  A=\left\{\alpha,\beta\right\}\cup \left\{\varepsilon^{[k]}\colon k\in\mathbb{N}\setminus\{1\}\right\}
\end{equation*}
generates the monoid $\mathbf{I}\mathbb{N}_{\infty}$. Indeed, fix an arbitrary $\gamma\in \mathbf{I}\mathbb{N}_{\infty}$. By Lemma~1 from \cite{Gutik-Savchuk-2018}, $\gamma$ is a partial shift of the set of  integers $\mathbb{Z}$ and hence by Remark~\ref{remark-2.4} there exist a non-negative integers $i$ and $j$ such that $(x)\beta^i\alpha^j=(x)\gamma$ for any $x\in\operatorname{dom}\gamma$ and $\underline{n}_{\gamma}^{\mathbf{d}}$ is the smallest element of $\operatorname{dom}(\beta^i\alpha^j)$. If $\gamma=\beta^i\alpha^j$ then the proof is complete. In the other case we have that $\operatorname{dom}(\beta^i\alpha^j)\setminus\operatorname{dom}\gamma\neq\varnothing$ and put
\begin{equation*}
  \left\{i_1,\ldots,i_p\right\}=\operatorname{dom}(\beta^i\alpha^j)\setminus\operatorname{dom}\gamma.
\end{equation*}
Then Lemma~1 from \cite{Gutik-Savchuk-2018} implies that $\gamma=\varepsilon^{[i_1]}\cdots \varepsilon^{[i_p]}\beta^i\alpha^j$, which implies that the set $A$ generates the monoid $\mathbf{I}\mathbb{N}_{\infty}$.
\end{example}

\begin{remark}\label{remark-3.9}
We observe that for any positive integers $k$ and $l$ such that $k>l\geqslant 2$ we have that
\begin{equation*}
  \varepsilon^{[l]}=\alpha^{k-l}\varepsilon^{[k]}\beta^{k-l}.
\end{equation*}
This implies that the set $A$ from Example~\ref{example-3.8} has not a minimal set of generators of the monoid $\mathbf{I}\mathbb{N}_{\infty}$.
\end{remark}

Example~\ref{example-3.8} and Remark~\ref{remark-3.9} imply the following corollary.

\begin{corollary}\label{corollary-3.10}
Every finitely generated subsemigroup of $\mathbf{I}\mathbb{N}_{\infty}$ is a subsemigroup of an inverse subsemigroup of $\mathbf{I}\mathbb{N}_{\infty}$ generated by three elements.
\end{corollary}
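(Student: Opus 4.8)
The plan is to combine the explicit generating set exhibited in Example~\ref{example-3.8} with the reduction recorded in Remark~\ref{remark-3.9}. Let $T$ be a finitely generated subsemigroup of $\mathbf{I}\mathbb{N}_{\infty}$, say $T=\langle\gamma_1,\ldots,\gamma_n\rangle$. First I would invoke Example~\ref{example-3.8} to express each generator $\gamma_i$ as a finite product of elements of the set $A=\{\alpha,\beta\}\cup\{\varepsilon^{[k]}\colon k\in\mathbb{N}\setminus\{1\}\}$. Since there are only finitely many factors $\gamma_1,\ldots,\gamma_n$ and each is a product of finite length, only finitely many idempotents $\varepsilon^{[k]}$ occur across all of these expressions; let $K$ be the largest index that appears (and if no such idempotent appears, so that $T\subseteq\mathscr{C}_{\mathbb{N}}$, simply take $K=2$).

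The key step is then to absorb all the occurring idempotents into a single one. By Remark~\ref{remark-3.9}, for every $l$ with $2\leqslant l<K$ one has $\varepsilon^{[l]}=\alpha^{K-l}\varepsilon^{[K]}\beta^{K-l}$, so each such $\varepsilon^{[l]}$ already lies in the subsemigroup generated by $\{\alpha,\beta,\varepsilon^{[K]}\}$. Substituting these identities into the expressions for $\gamma_1,\ldots,\gamma_n$ shows that every $\gamma_i$, and hence all of $T$, is contained in $S:=\langle\alpha,\beta,\varepsilon^{[K]}\rangle$.

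It remains to verify that $S$ is an inverse subsemigroup. Here I would use that the three-element generating set is closed under inversion: $\alpha$ and $\beta$ are mutually inverse, since by Remark~\ref{remark-2.4} the monoid $\mathscr{C}_{\mathbb{N}}$ is the bicyclic monoid and its canonical generators satisfy $\alpha\beta\alpha=\alpha$ and $\beta\alpha\beta=\beta$, while $\varepsilon^{[K]}$ is an idempotent and so equals its own inverse. Consequently the inverse of any product of these generators is, up to reversing the order, again a product of these generators, so $S$ is closed under the inversion of $\mathbf{I}\mathbb{N}_{\infty}$ and is therefore an inverse subsemigroup generated by three elements, with $T\subseteq S$.

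I expect no serious obstacle: the statement is essentially a bookkeeping consequence of Example~\ref{example-3.8} and Remark~\ref{remark-3.9}. The only two points requiring genuine care are the finiteness argument that bounds the indices of all occurring idempotents by a single integer $K$, and the closure-under-inversion check that guarantees $S$ is genuinely an \emph{inverse} subsemigroup rather than merely a subsemigroup.
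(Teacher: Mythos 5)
Your proposal is correct and takes essentially the same route as the paper, which states Corollary~\ref{corollary-3.10} as a direct consequence of Example~\ref{example-3.8} and Remark~\ref{remark-3.9}: express the finitely many generators via the set $A$, bound the indices of the occurring idempotents by a single $K$, absorb each $\varepsilon^{[l]}$ with $l<K$ into $\varepsilon^{[K]}$ by Remark~\ref{remark-3.9}, and conclude that everything lies in $\left\langle\alpha,\beta,\varepsilon^{[K]}\right\rangle$. Your explicit check that this subsemigroup is inverse (the generating set $\{\alpha,\beta,\varepsilon^{[K]}\}$ being closed under inversion) is a detail the paper leaves implicit, but it is exactly the intended argument.
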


\begin{lemma}\label{lemma-3.11}
Let $A$ be any generating set of the monoid $\mathbf{I}\mathbb{N}_{\infty}$. The there exists a minimal finite subset $A_\mathscr{C}^\circ$ of $A$ such that $\mathscr{C}_{\mathbb{N}}\subseteq \left\langle A_\mathscr{C}^\circ\right\rangle$.
\end{lemma}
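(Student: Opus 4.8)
The plan is first to exhibit some finite subset of $A$ whose generated subsemigroup contains $\mathscr{C}_{\mathbb{N}}$, and then to extract a minimal such subset by a purely finitary argument.

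First I would invoke Remark~\ref{remark-2.4}, which tells us that $\mathscr{C}_{\mathbb{N}}=\langle\alpha,\beta\rangle$, where $\alpha$ and $\beta$ are the two partial shifts defined there (note that the identity $\mathbb{I}=\alpha\beta$ already lies in the semigroup generated by $\{\alpha,\beta\}$, so there is no discrepancy between semigroup and monoid generation here). Since $A$ generates $\mathbf{I}\mathbb{N}_{\infty}$, each of the elements $\alpha$ and $\beta$ is a finite product of members of $A$, say $\alpha=a_1\cdots a_s$ and $\beta=b_1\cdots b_t$ with all $a_i,b_j\in A$. Collecting the finitely many generators occurring here into a single set
\begin{equation*}
  B=\{a_1,\ldots,a_s,b_1,\ldots,b_t\}\subseteq A,
\end{equation*}
I obtain $\alpha,\beta\in\langle B\rangle$, whence $\mathscr{C}_{\mathbb{N}}=\langle\alpha,\beta\rangle\subseteq\langle B\rangle$. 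Thus $B$ is a finite subset of $A$ realizing the required containment.

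Next I would extract minimality. Because $B$ is finite, the family
\begin{equation*}
  \mathcal{F}=\left\{C\subseteq B\colon \mathscr{C}_{\mathbb{N}}\subseteq\langle C\rangle\right\}
\end{equation*}
is a finite, non-empty collection of sets (it contains $B$). Ordered by inclusion, any finite non-empty family of sets has a minimal element, so I may choose $A_\mathscr{C}^\circ\in\mathcal{F}$ minimal, meaning that no proper subset of $A_\mathscr{C}^\circ$ belongs to $\mathcal{F}$.

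It remains to confirm that $A_\mathscr{C}^\circ$ is minimal among \emph{all} finite subsets of $A$ with the containment property, not merely among subsets of $B$: if some $D\subsetneq A_\mathscr{C}^\circ$ satisfied $\mathscr{C}_{\mathbb{N}}\subseteq\langle D\rangle$, then from $D\subseteq A_\mathscr{C}^\circ\subseteq B$ we would get $D\in\mathcal{F}$, contradicting the minimality of $A_\mathscr{C}^\circ$ in $\mathcal{F}$. I do not expect a genuine obstacle in this argument: the statement reduces to the finite generation of $\mathscr{C}_{\mathbb{N}}$ by $\{\alpha,\beta\}$ together with a routine finiteness observation, and the only mild subtlety is the last step, verifying that minimality inside the chosen finite set $B$ already upgrades to minimality over all finite subsets of $A$.
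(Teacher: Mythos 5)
Your proof is correct and is essentially the same as the paper's: both write $\alpha$ and $\beta$ (the generators of $\mathscr{C}_{\mathbb{N}}$ from Remark~\ref{remark-2.4}) as finite products of elements of $A$, collect the factors into a finite subset $B\subseteq A$ with $\mathscr{C}_{\mathbb{N}}\subseteq\langle B\rangle$, and then extract a minimal such subset by finiteness. The only difference is one of exposition --- the paper leaves the final extraction (and the point that minimality inside $B$ suffices) implicit, while you spell it out.
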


\begin{proof}
Let $\alpha$ and $\beta$ be elements of the submonoid $\mathscr{C}_{\mathbb{N}}$ of $\mathbf{I}\mathbb{N}_{\infty}$ which are described in Remark~\ref{remark-2.4}. Then there exist finitely many $\gamma_1,\ldots,\gamma_k,\delta_1,\ldots,\delta_l\in A$ such that $\alpha=\gamma_1\cdots\gamma_k$ and $\beta=\delta_1\cdots\delta_l$. Since $\alpha$ and $\beta$ generate $\mathscr{C}_{\mathbb{N}}$, we obtain that $\left\langle\gamma_1,\ldots,\gamma_k,\delta_1,\ldots,\delta_l\right\rangle\supseteq \mathscr{C}_{\mathbb{N}}$. Since the set $\left\{\gamma_1,\ldots,\gamma_k,\delta_1,\ldots,\delta_l\right\}$ is finite, it contains a minimal subset $A_\mathscr{C}^\circ$ such that $\mathscr{C}_{\mathbb{N}}\subseteq \left\langle A_\mathscr{C}^\circ\right\rangle$.
\end{proof}

For any  integer $j\geqslant 0$ we define
\begin{equation*}
  \mathbf{I}\mathbb{N}_{\infty}^{\boldsymbol{g}[j]}=\left\{\gamma\in \mathbf{I}\mathbb{N}_{\infty}\colon n_{\gamma}^{\mathbf{d}}-\underline{n}_{\gamma}^{\mathbf{d}}\leqslant j\right\}.
\end{equation*}
Therefore, by Lemmas~\ref{lemma-3.4},~\ref{lemma-3.5}, and~\ref{lemma-3.6} we obtain an infinite inverse semigroup series in the monoid $\mathbf{I}\mathbb{N}_{\infty}$:
\begin{equation*}
\mathscr{C}_{\mathbb{N}}=\mathbf{I}\mathbb{N}_{\infty}^{\boldsymbol{g}[0]}= \mathbf{I}\mathbb{N}_{\infty}^{\boldsymbol{g}[1]}\subsetneqq \mathbf{I}\mathbb{N}_{\infty}^{\boldsymbol{g}[2]}\subsetneqq \mathbf{I}\mathbb{N}_{\infty}^{\boldsymbol{g}[1]}\subsetneqq \cdots \subsetneqq \mathbf{I}\mathbb{N}_{\infty}^{\boldsymbol{g}[k]}\subsetneqq \cdots \subset \mathbf{I}\mathbb{N}_{\infty}.
\end{equation*}

Theorems 1, 4, and 5 from \cite{Gutik-Savchuk-2019} imply the following proposition.

\begin{proposition}
For any integer $k\geqslant 0$ the following assertions hold:
\begin{itemize}
  \item[$(i)$] every automorphism of $\mathbf{I}\mathbb{N}_{\infty}^{\boldsymbol{g}[k]}$ is the identity map;
  \item[$(ii)$] the quotient semigroup $\mathbf{I}\mathbb{N}_{\infty}^{\boldsymbol{g}[k]}/\mathfrak{C}_{\mathbf{mg}}$ is isomorphic to the additive group of integers $\mathbb{Z}(+)$;
  \item[$(iii)$] $\mathbf{I}\mathbb{N}_{\infty}^{\boldsymbol{g}[k]}$ is an inverse simple semigroup.
\end{itemize}
\end{proposition}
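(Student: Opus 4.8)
The plan is to reduce the three assertions to the cited results of \cite{Gutik-Savchuk-2019} by checking that, for every $k\geqslant 0$, the set $\mathbf{I}\mathbb{N}_{\infty}^{\boldsymbol{g}[k]}$ is an inverse submonoid of $\mathscr{I}_{\infty}^{\,\Rsh\!\!\!\nearrow}(\mathbb{N})$ that contains $\mathscr{C}_{\mathbb{N}}$. Closure under the semigroup operation is already available: given $\gamma,\delta\in\mathbf{I}\mathbb{N}_{\infty}^{\boldsymbol{g}[k]}$, if either factor lies in $\mathscr{C}_{\mathbb{N}}$ then Lemma~\ref{lemma-3.4} or Lemma~\ref{lemma-3.5} gives $n_{\gamma\delta}^{\mathbf{d}}-\underline{n}_{\gamma\delta}^{\mathbf{d}}\leqslant k$; if $\gamma\delta\in\mathscr{C}_{\mathbb{N}}$ the bound is trivially $0$; and the remaining case, where $\gamma,\delta,\gamma\delta\in\mathbf{I}\mathbb{N}_{\infty}\setminus\mathscr{C}_{\mathbb{N}}$, is exactly Lemma~\ref{lemma-3.6}. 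Together these cases show $\gamma\delta\in\mathbf{I}\mathbb{N}_{\infty}^{\boldsymbol{g}[k]}$.

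Next I would verify closure under inversion and the monoid condition. Since every element of $\mathbf{I}\mathbb{N}_{\infty}$ is a partial shift of $\mathbb{Z}$ (Lemma~1 of \cite{Gutik-Savchuk-2018}), for $\gamma\in\mathbf{I}\mathbb{N}_{\infty}$ the domain of $\gamma^{-1}$ is $\operatorname{ran}\gamma$, so that $\underline{n}_{\gamma^{-1}}^{\mathbf{d}}=\underline{n}_{\gamma}^{\mathbf{r}}$ and $n_{\gamma^{-1}}^{\mathbf{d}}=n_{\gamma}^{\mathbf{r}}$. Hence Lemma~\ref{lemma-3.3} gives
\begin{equation*}
  n_{\gamma^{-1}}^{\mathbf{d}}-\underline{n}_{\gamma^{-1}}^{\mathbf{d}}=n_{\gamma}^{\mathbf{r}}-\underline{n}_{\gamma}^{\mathbf{r}}=n_{\gamma}^{\mathbf{d}}-\underline{n}_{\gamma}^{\mathbf{d}}\leqslant k,
\end{equation*}
so $\gamma^{-1}\in\mathbf{I}\mathbb{N}_{\infty}^{\boldsymbol{g}[k]}$. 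The identity map $\mathbb{I}$ and every element of $\mathscr{C}_{\mathbb{N}}$ satisfy $n_{\gamma}^{\mathbf{d}}-\underline{n}_{\gamma}^{\mathbf{d}}=0\leqslant k$, so $\mathscr{C}_{\mathbb{N}}\subseteq\mathbf{I}\mathbb{N}_{\infty}^{\boldsymbol{g}[k]}$ and $\mathbf{I}\mathbb{N}_{\infty}^{\boldsymbol{g}[k]}$ is a submonoid. Combined with the previous paragraph, $\mathbf{I}\mathbb{N}_{\infty}^{\boldsymbol{g}[k]}$ is an inverse submonoid of $\mathbf{I}\mathbb{N}_{\infty}\subseteq\mathscr{I}_{\infty}^{\!\nearrow}(\mathbb{N})\subseteq\mathscr{I}_{\infty}^{\,\Rsh\!\!\!\nearrow}(\mathbb{N})$ that contains $\mathscr{C}_{\mathbb{N}}$, which is precisely the hypothesis needed to invoke \cite{Gutik-Savchuk-2019}.

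With the hypotheses checked, assertions $(ii)$ and $(iii)$ follow at once: $\mathbf{I}\mathbb{N}_{\infty}^{\boldsymbol{g}[k]}$ is an inverse submonoid of $\mathscr{I}_{\infty}^{\,\Rsh\!\!\!\nearrow}(\mathbb{N})$ containing $\mathscr{C}_{\mathbb{N}}$, so Theorem~4 of \cite{Gutik-Savchuk-2019} gives $\mathbf{I}\mathbb{N}_{\infty}^{\boldsymbol{g}[k]}/\mathfrak{C}_{\mathbf{mg}}\cong\mathbb{Z}(+)$ and Theorem~5 gives simplicity --- the very same step already used in the proof of Theorem~\ref{theorem-2.11}. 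For $(i)$ I would appeal to the automorphism theorem (Theorem~1 of \cite{Gutik-Savchuk-2019}), and this is where I expect the main obstacle. That theorem is stated for \emph{full} inverse subsemigroups of $\mathscr{I}_{\infty}^{\!\nearrow}(\mathbb{N})$ containing $\mathscr{C}_{\mathbb{N}}$, whereas $\mathbf{I}\mathbb{N}_{\infty}^{\boldsymbol{g}[k]}$ is \emph{not} full: an idempotent of $\mathscr{I}_{\infty}^{\!\nearrow}(\mathbb{N})$ whose domain omits two points far apart can have $n^{\mathbf{d}}-\underline{n}^{\mathbf{d}}$ arbitrarily large, and hence lie outside $\mathbf{I}\mathbb{N}_{\infty}^{\boldsymbol{g}[k]}$. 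Thus the delicate point is to confirm that the rigidity argument behind Theorem~1 --- which ultimately rests on the trivial isometry group of $\mathbb{N}$ and on the identification of $\mathscr{C}_{\mathbb{N}}$ inside $\mathbf{I}\mathbb{N}_{\infty}^{\boldsymbol{g}[k]}$ --- uses only the idempotents actually present in $\mathbf{I}\mathbb{N}_{\infty}^{\boldsymbol{g}[k]}$, so that it carries over verbatim to the band of our semigroup; once this is granted, any automorphism must fix $\mathscr{C}_{\mathbb{N}}$ pointwise and therefore be the identity on all of $\mathbf{I}\mathbb{N}_{\infty}^{\boldsymbol{g}[k]}$.
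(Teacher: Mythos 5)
Your handling of assertions $(ii)$ and $(iii)$ coincides with the paper's own proof, which consists of the single sentence that Theorems 1, 4, and 5 of \cite{Gutik-Savchuk-2019} imply the proposition, resting on the observation (made just before the statement) that Lemmas~\ref{lemma-3.4}, \ref{lemma-3.5} and \ref{lemma-3.6} make each $\mathbf{I}\mathbb{N}_{\infty}^{\boldsymbol{g}[k]}$ a member of an inverse semigroup series containing $\mathscr{C}_{\mathbb{N}}$. Your case analysis for closure under composition is exactly that observation, and your verification of closure under inversion via Lemma~\ref{lemma-3.3} (using $\operatorname{dom}\gamma^{-1}=\operatorname{ran}\gamma$ and the fact that elements of $\mathbf{I}\mathbb{N}_{\infty}$ are partial shifts) fills in a detail the paper leaves implicit. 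Given that $\mathbf{I}\mathbb{N}_{\infty}^{\boldsymbol{g}[k]}$ is an inverse submonoid of $\mathscr{I}_{\infty}^{\,\Rsh\!\!\!\nearrow}(\mathbb{N})$ containing $\mathscr{C}_{\mathbb{N}}$, the citations of Theorems 4 and 5 of \cite{Gutik-Savchuk-2019} for $(ii)$ and $(iii)$ are legitimate, just as in the paper's Theorem~\ref{theorem-2.11}.

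For $(i)$ you have put your finger on a real issue, and it is worth being clear about its status. The paper proves $(i)$ by the same bare citation of Theorem 1 of \cite{Gutik-Savchuk-2019}, with no comment on its hypotheses; but that theorem, as the present paper itself describes it in the introduction, concerns \emph{full} inverse subsemigroups of $\mathscr{I}_{\infty}^{\!\nearrow}(\mathbb{N})$ containing $\mathscr{C}_{\mathbb{N}}$, and you are right that $\mathbf{I}\mathbb{N}_{\infty}^{\boldsymbol{g}[k]}$ is not full for any finite $k$: for instance the identity map of $\{1\}\cup\left[k+3\right)$ is an idempotent of $\mathbf{I}\mathbb{N}_{\infty}$, hence of $\mathscr{I}_{\infty}^{\!\nearrow}(\mathbb{N})$, with $n^{\mathbf{d}}-\underline{n}^{\mathbf{d}}=k+2>k$, so it lies outside $\mathbf{I}\mathbb{N}_{\infty}^{\boldsymbol{g}[k]}$. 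Consequently the cited theorem does not apply verbatim, and your proposal does not repair this: your final sentence only \emph{asserts} that the rigidity argument of \cite{Gutik-Savchuk-2019} uses nothing beyond the idempotents present in $\mathbf{I}\mathbb{N}_{\infty}^{\boldsymbol{g}[k]}$, without checking it. So assertion $(i)$ remains unproved in your write-up --- but it is equally unproved in the paper, whose one-line proof silently elides the fullness hypothesis. A complete argument would require re-running the automorphism-rigidity proof of \cite{Gutik-Savchuk-2019} inside $\mathbf{I}\mathbb{N}_{\infty}^{\boldsymbol{g}[k]}$ (showing, say, that an automorphism must fix the band of $\mathbf{I}\mathbb{N}_{\infty}^{\boldsymbol{g}[k]}$ elementwise and hence fix $\mathscr{C}_{\mathbb{N}}$, from which identity on all of $\mathbf{I}\mathbb{N}_{\infty}^{\boldsymbol{g}[k]}$ follows); neither you nor the paper carries this out, and you deserve credit for being the one to flag it.
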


In the sequel, for any positive integer $j\geqslant 2$ by $\varepsilon^{[j]}$ we shall denote the idempotent which is defined in Example~\ref{example-3.8}.

\begin{lemma}\label{lemma-3.12}
Let $k$ be any integer $\geqslant 2$. If $A$ is a subset of $\mathbf{I}\mathbb{N}_{\infty}$ such that $\mathscr{C}_{\mathbb{N}}$ is a subsemigroup of $\left\langle A\right\rangle$ and $\varepsilon^{[k]}\in\left\langle A\right\rangle$, then $\mathbf{I}\mathbb{N}_{\infty}^{\boldsymbol{g}[k]}$ is a subsemigroup of $\left\langle A\right\rangle$.
\end{lemma}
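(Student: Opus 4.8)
The plan is to prove the stronger inclusion $\mathbf{I}\mathbb{N}_{\infty}^{\boldsymbol{g}[k]}\subseteq\left\langle A\right\rangle$; since $\mathbf{I}\mathbb{N}_{\infty}^{\boldsymbol{g}[k]}$ is already closed under multiplication (this is exactly the content of Lemmas~\ref{lemma-3.4}, \ref{lemma-3.5}, and~\ref{lemma-3.6}, displayed as the inverse semigroup series above), this inclusion makes it a subsemigroup of $\left\langle A\right\rangle$. So I would fix an arbitrary $\gamma\in\mathbf{I}\mathbb{N}_{\infty}^{\boldsymbol{g}[k]}$ and exhibit it as a product of elements of $\left\langle A\right\rangle$, working in two stages: first peel off the translation part, reducing the problem to idempotents, and then build each idempotent from $\varepsilon^{[k]}$ and $\mathscr{C}_{\mathbb{N}}$.

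\textbf{Peeling off the shift.} Put $a=\underline{n}_{\gamma}^{\mathbf{d}}$. By Lemma~1 of \cite{Gutik-Savchuk-2018} the map $\gamma$ is a single partial shift of $\mathbb{Z}$, so $\omega=\beta^{\,a-1}\alpha^{\,\underline{n}_{\gamma}^{\mathbf{r}}-1}\in\mathscr{C}_{\mathbb{N}}$ (both exponents are $\geqslant 0$) is the restriction to $[a)$ of the very translation of which $\gamma$ is a restriction, and hence $\omega$ agrees with $\gamma$ on $\operatorname{dom}\gamma\subseteq[a)$. A direct check of the composition then gives $\gamma=\left(\gamma\gamma^{-1}\right)\omega$, where $\gamma\gamma^{-1}$ is the identity map of $\operatorname{dom}\gamma$. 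Since $\omega\in\mathscr{C}_{\mathbb{N}}\subseteq\left\langle A\right\rangle$ and $\gamma\gamma^{-1}$ has the same domain as $\gamma$ (so $\gamma\gamma^{-1}\in\mathbf{I}\mathbb{N}_{\infty}^{\boldsymbol{g}[k]}$), it suffices to prove that every idempotent of $\mathbf{I}\mathbb{N}_{\infty}^{\boldsymbol{g}[k]}$ lies in $\left\langle A\right\rangle$.

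\textbf{Building the idempotent from a bottom pattern.} First, Remark~\ref{remark-3.9} gives $\varepsilon^{[l]}=\alpha^{k-l}\varepsilon^{[k]}\beta^{k-l}\in\left\langle A\right\rangle$ for every $l$ with $2\leqslant l\leqslant k$. Now let $\iota$ be the identity map of a cofinite set $D$ with $a=\min D=\underline{n}_{\iota}^{\mathbf{d}}$ and $n_{\iota}^{\mathbf{d}}-a\leqslant k$; then $D=[a)\setminus H$ with $H\subseteq\{a+1,\dots,a+k-1\}$ the (possibly empty) set of interior holes. Shifting $H$ down by $a-1$ yields $H_0=H-(a-1)\subseteq\{2,\dots,k\}$, and the identity map $\iota_0$ of $\mathbb{N}\setminus H_0$ equals $\prod_{l\in H_0}\varepsilon^{[l]}\in\left\langle A\right\rangle$. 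Finally, computing the composition shows $\iota=\beta^{\,a-1}\iota_0\alpha^{\,a-1}$: conjugating upward carries the holes $H_0$ to $H$, and because $\beta^{\,a-1}$ has domain $[a)$ it automatically deletes exactly the points $\{1,\dots,a-1\}$ that $D$ must omit. Hence $\iota\in\left\langle A\right\rangle$, and together with the previous paragraph this proves $\gamma\in\left\langle A\right\rangle$.

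The step I expect to be the crux is the passage $\iota=\beta^{\,a-1}\iota_0\alpha^{\,a-1}$, and the conceptual obstacle it resolves is this. The naive factorization of Example~\ref{example-3.8} would require single-point idempotents $\varepsilon^{[i]}$ at the hole positions $i$, which can be arbitrarily large once $\underline{n}_{\gamma}^{\mathbf{d}}$ is large; yet Remark~\ref{remark-3.9} only produces $\varepsilon^{[l]}$ with $l\leqslant k$, and one cannot lift a single hole upward without the conjugation $\beta(\,\cdot\,)\alpha$ introducing a spurious extra hole at $1$. The way around this is to never isolate a high hole: one assembles the \emph{entire} hole pattern inside the bottom window $\{2,\dots,k\}$, where all the needed $\varepsilon^{[l]}$ are available, and then transports the whole idempotent up at once by $\beta^{\,a-1}(\,\cdot\,)\alpha^{\,a-1}$, whose otherwise undesirable side effect of deleting $\{1,\dots,a-1\}$ is precisely what the target domain $D$ demands.
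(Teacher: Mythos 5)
Your proof is correct and takes essentially the same route as the paper's: both use Remark~\ref{remark-3.9} to get $\varepsilon^{[l]}\in\langle A\rangle$ for all $2\leqslant l\leqslant k$, assemble the hole pattern of $\operatorname{dom}\gamma$ inside the bottom window as a product of these idempotents, and transport it back up by conjugating with powers of $\alpha,\beta\in\mathscr{C}_{\mathbb{N}}$. The only difference is bookkeeping: the paper conjugates $\gamma$ directly, forming $\varepsilon_0=\alpha^{\underline{n}_{\gamma}^{\mathbf{d}}}\gamma\beta^{\underline{n}_{\gamma}^{\mathbf{r}}}$ and recovering $\gamma=\beta^{\underline{n}_{\gamma}^{\mathbf{d}}}\varepsilon_0\alpha^{\underline{n}_{\gamma}^{\mathbf{r}}}$, while you first factor $\gamma=(\gamma\gamma^{-1})\omega$ and then conjugate only the idempotent; since $\alpha^{a-1}\omega=\alpha^{\underline{n}_{\gamma}^{\mathbf{r}}-1}$, your decomposition collapses to the paper's one (with your exponents in fact repairing the paper's off-by-one in the powers of $\alpha$ and $\beta$).
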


\begin{proof}
By Remark~\ref{remark-3.9} any idempotent $\varepsilon^{[l]}$ of $\mathbf{I}\mathbb{N}_{\infty}$ such that $l< k$ is generated by the idempotent $\varepsilon^{[k]}$ and the elements $\alpha$ and $\beta$ of $\mathscr{C}_{\mathbb{N}}$. Since $\varepsilon=\varepsilon^{[i_1]}\cdots\varepsilon^{[i_p]}$, where $i_1,\ldots,l_p\leqslant k$, for any idempotent $\varepsilon\in\mathbf{I}\mathbb{N}_{\infty}$ with $\varepsilon\preccurlyeq\beta^k\alpha^k$, we conclude that every idempotent $\varepsilon\preccurlyeq\beta^k\alpha^k$ of $\mathbf{I}\mathbb{N}_{\infty}$ is generated by the set $A$.

Fix any element $\gamma$ of the semigroup $\mathbf{I}\mathbb{N}_{\infty}^{\boldsymbol{g}[k]}$. Then the arguments presented in Example~\ref{example-3.8} show that the partial map $\gamma$ is a partial shift of the set $\operatorname{dom}\gamma$ such that $\gamma$ is the restriction of $\beta^{\underline{n}_{\gamma}^{\mathbf{d}}}\alpha^{\underline{n}_{\gamma}^{\mathbf{r}}}$ onto the set $\operatorname{dom}\gamma$. Since $\alpha^{\underline{n}_{\gamma}^{\mathbf{d}}}\beta^{\underline{n}_{\gamma}^{\mathbf{d}}} \alpha^{\underline{n}_{\gamma}^{\mathbf{r}}}\beta^{\underline{n}_{\gamma}^{\mathbf{r}}}$ is the identity map of $\mathbb{N}$, the previous arguments imply that $\varepsilon_0=\alpha^{\underline{n}_{\gamma}^{\mathbf{d}}}\gamma\beta^{\underline{n}_{\gamma}^{\mathbf{r}}}$ is an idempotent of the monoid $\mathbf{I}\mathbb{N}_{\infty}$. By Lemmas~\ref{lemma-3.4},~\ref{lemma-3.5}, \ref{lemma-3.6} and Lemma~1 of \cite{Gutik-Savchuk-2018}, $\varepsilon_0$ belongs to the semigroup $\mathbf{I}\mathbb{N}_{\infty}^{\boldsymbol{g}[k]}$. By the previous part of the proof there exist $\gamma_1,\ldots,\gamma_n\in A$ such that $\varepsilon_0=\gamma_1\cdots\gamma_n$. Again, since $\gamma$ is the restriction of $\beta^{\underline{n}_{\gamma}^{\mathbf{d}}}\alpha^{\underline{n}_{\gamma}^{\mathbf{r}}}$ onto the set $\operatorname{dom}\gamma$, we obtain that \begin{equation*}
  \beta^{\underline{n}_{\gamma}^{\mathbf{d}}}\alpha^{\underline{n}_{\gamma}^{\mathbf{d}}}\gamma \beta^{\underline{n}_{\gamma}^{\mathbf{r}}}\alpha^{\underline{n}_{\gamma}^{\mathbf{r}}}=\gamma.
\end{equation*}
This implies that
\begin{equation*}
  \beta^{\underline{n}_{\gamma}^{\mathbf{d}}}\varepsilon_0\alpha^{\underline{n}_{\gamma}^{\mathbf{r}}}= \beta^{\underline{n}_{\gamma}^{\mathbf{d}}}\alpha^{\underline{n}_{\gamma}^{\mathbf{d}}}\gamma \beta^{\underline{n}_{\gamma}^{\mathbf{r}}}\alpha^{\underline{n}_{\gamma}^{\mathbf{r}}}=\gamma,
\end{equation*}
and hence the statement of our lemma holds.
\end{proof}

\begin{lemma}\label{lemma-3.13}
Let $A$ be a generating set of the monoid $\mathbf{I}\mathbb{N}_{\infty}$ and $A_\mathscr{C}^\circ$ be a minimal finite subset of $A$ such that $\mathscr{C}_{\mathbb{N}}$ is a subsemigroup of $\langle A\rangle$. Then for any integer $k\geqslant 2$ and any representation $\varepsilon^{[k]}=\gamma_1\cdots\gamma_s$, $\gamma_1,\ldots,\gamma_s\in A$, there exist finitely many $\gamma_1^*,\ldots,\gamma_s^*\in A\cup \mathscr{C}_{\mathbb{N}}$ such that
\begin{equation}\label{eq3-1}
\varepsilon^{[k]}=\gamma_1^*\cdots\gamma_s^* \quad \hbox{and either} \quad \gamma_j^*=\gamma_j\in A\setminus \mathbf{I}\mathbb{N}_{\infty}^{\boldsymbol{g}[k-1]} \; \hbox{or} \; \gamma_j^*\in \mathscr{C}_{\mathbb{N}}, \quad \hbox{for} \quad j=1,\ldots,s.
\end{equation}
 Moreover, if $\gamma_j^*\in \mathscr{C}_{\mathbb{N}}$,  then there exist $\delta_{j,1},\ldots,\delta_{j,p_j}\in A_\mathscr{C}^\circ$ such that $\gamma_j^*=\delta_{j,1}\cdots\delta_{j,p_j}$ for some positive integer $p_j$.
\end{lemma}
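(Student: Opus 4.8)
The plan is to single out one ``responsible'' factor at which the point $k$ gets deleted, to show that this factor is forced to lie outside $\mathbf{I}\mathbb{N}_{\infty}^{\boldsymbol{g}[k-1]}$, and to replace every other factor by an element of $\mathscr{C}_{\mathbb{N}}$ without changing the product. By Lemma~1 of \cite{Gutik-Savchuk-2018} each $\gamma_i$ is a partial shift of $\mathbb{Z}$, say by $t_i$; I would write $P_i=\gamma_1\cdots\gamma_i$ (with $P_0=\mathbb{I}$) and $T_i=t_1+\cdots+t_i$, so $P_i$ is a partial shift by $T_i$ and, since $\varepsilon^{[k]}$ is the identity on its domain, $T_s=0$. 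Because $k\notin\operatorname{dom}\varepsilon^{[k]}$, the orbit $k,(k)P_1,(k)P_2,\dots$ becomes undefined; let $j_0$ be the first index with $k\notin\operatorname{dom}P_{j_0}$. The point $\gamma_{j_0}$ is the one I intend to keep, replacing all others.

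First I would pin down the partial products. Every $x\neq k$ survives the whole composition and $k$ survives up to stage $j_0-1$, so $\operatorname{dom}P_{j_0-1}=\mathbb{N}$; as domains only shrink, $\operatorname{dom}P_i=\mathbb{N}$ for all $i<j_0$, whence each such $P_i$ equals the full shift $\alpha^{T_i}$ with $T_i\geqslant 0$ and $\operatorname{ran}P_i=[1+T_i)$. Likewise $\operatorname{dom}P_i=\mathbb{N}\setminus\{k\}$ and $\operatorname{ran}P_i=[1+T_i)\setminus\{k+T_i\}$ for $i\geqslant j_0$, again with $T_i\geqslant 0$. Writing $T=T_{j_0-1}$, the survival of every $x\neq k$ through $\gamma_{j_0}$ forces $\operatorname{dom}\gamma_{j_0}\cap[1+T)=[1+T)\setminus\{k+T\}$, while $(k)P_{j_0-1}=k+T\notin\operatorname{dom}\gamma_{j_0}$. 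Hence the hole at $k+T$ gives $n^{\mathbf{d}}_{\gamma_{j_0}}=k+T+1$, whereas $1+T\in\operatorname{dom}\gamma_{j_0}$ gives $\underline{n}^{\mathbf{d}}_{\gamma_{j_0}}\leqslant 1+T$; therefore $n^{\mathbf{d}}_{\gamma_{j_0}}-\underline{n}^{\mathbf{d}}_{\gamma_{j_0}}\geqslant k$, i.e. $\gamma_{j_0}\in A\setminus\mathbf{I}\mathbb{N}_{\infty}^{\boldsymbol{g}[k-1]}$. This is the crucial point; I would set $\gamma_{j_0}^{*}=\gamma_{j_0}$.

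For every index $i\neq j_0$ the incoming set $\operatorname{ran}P_{i-1}$ is $[1+T_{i-1})$ when $i<j_0$ and $[1+T_{i-1})\setminus\{k+T_{i-1}\}$ when $i>j_0$, and comparing $P_i$ with $P_{i-1}$ shows that $\gamma_i$ acts there exactly as the pure shift $y\mapsto y+t_i$. I would therefore put $\gamma_i^{*}=\beta^{T_{i-1}}\alpha^{T_i}\in\mathscr{C}_{\mathbb{N}}$, the shift by $t_i$ with domain the half-line $[1+T_{i-1})\supseteq\operatorname{ran}P_{i-1}$. Then $\gamma_i^{*}$ agrees with $\gamma_i$ on all of $\operatorname{ran}P_{i-1}$, and the only point where their domains may differ, namely $k+T_{i-1}$, never belongs to $\operatorname{ran}P_{i-1}$; consequently $P_{i-1}\gamma_i^{*}=P_{i-1}\gamma_i=P_i$. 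An induction on $i$, using $P_{j_0-1}=\alpha^{T}$ exactly and $\gamma_{j_0}^{*}=\gamma_{j_0}$, then gives $\gamma_1^{*}\cdots\gamma_s^{*}=P_s=\varepsilon^{[k]}$, with each $\gamma_i^{*}$ either equal to $\gamma_{j_0}\in A\setminus\mathbf{I}\mathbb{N}_{\infty}^{\boldsymbol{g}[k-1]}$ or lying in $\mathscr{C}_{\mathbb{N}}$, which is precisely \eqref{eq3-1}.

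The ``moreover'' clause is then immediate: since $A_\mathscr{C}^\circ$ satisfies $\mathscr{C}_{\mathbb{N}}\subseteq\langle A_\mathscr{C}^\circ\rangle$ by Lemma~\ref{lemma-3.11}, each $\gamma_j^{*}\in\mathscr{C}_{\mathbb{N}}$ is a product $\delta_{j,1}\cdots\delta_{j,p_j}$ of elements of $A_\mathscr{C}^\circ$. I expect the main obstacle to be the bookkeeping in the second paragraph: one must check that the deletion of $k$ occurs at one well-defined factor, that the surrounding partial products are genuine full shifts, and that it is exactly the gap $k$ of $\varepsilon^{[k]}$ that is transferred to $\gamma_{j_0}$. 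Once this is established, the replacement step is robust, because the point $k+T_{i-1}$ that a full shift might re-admit always lies outside the relevant range $\operatorname{ran}P_{i-1}$.
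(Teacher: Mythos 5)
Your proof is correct, and its core mechanism is the same as the paper's: invoke Lemma~1 of \cite{Gutik-Savchuk-2018} to see every factor as a partial shift, keep the factor(s) responsible for the hole of $\varepsilon^{[k]}$ (which are forced to have $n^{\mathbf{d}}-\underline{n}^{\mathbf{d}}\geqslant k$, hence lie in $A\setminus \mathbf{I}\mathbb{N}_{\infty}^{\boldsymbol{g}[k-1]}$), and substitute every other factor by the element $\beta^{a}\alpha^{b}$ of $\mathscr{C}_{\mathbb{N}}$ that realizes the same shift on the relevant range. The organization, however, differs in a useful way. The paper processes the factors from left to right, doing at each factor a case analysis on the size and position of its gap, and so may retain several factors of $A\setminus \mathbf{I}\mathbb{N}_{\infty}^{\boldsymbol{g}[k-1]}$; its induction step is only sketched (``using induction up to $s$ in the similar way''), and the observation that the point $k$ dies at a well-defined first factor appears only as a closing remark. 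You instead make that observation the organizing principle: since $\operatorname{dom}(\gamma_1\cdots\gamma_i)$ can only be $\mathbb{N}$ or $\mathbb{N}\setminus\{k\}$ and domains shrink, there is a unique first index $j_0$ where $k$ is deleted; all partial products before $j_0$ are full shifts $\alpha^{T_i}$ and all later ones have domain $\mathbb{N}\setminus\{k\}$, which pins down $\operatorname{dom}\gamma_{j_0}\cap[1+T)$ exactly and yields the bound $n^{\mathbf{d}}_{\gamma_{j_0}}-\underline{n}^{\mathbf{d}}_{\gamma_{j_0}}\geqslant k$. This buys two things: a sharper form of \eqref{eq3-1} in which exactly one factor is kept from $A$ and all the rest lie in $\mathscr{C}_{\mathbb{N}}$, and an airtight verification of $\gamma_1^{*}\cdots\gamma_s^{*}=\varepsilon^{[k]}$, since the replacement $\gamma_i^{*}=\beta^{T_{i-1}}\alpha^{T_i}$ agrees with $\gamma_i$ on $\operatorname{ran}(\gamma_1\cdots\gamma_{i-1})$ and the only doubtful point $k+T_{i-1}$ never lies in that range. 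The one phrase to polish is the claim that $k+T_{i-1}$ is ``the only point where their domains may differ'': $\operatorname{dom}\gamma_i$ may also contain points below $1+T_{i-1}$, but as those too are outside $\operatorname{ran}(\gamma_1\cdots\gamma_{i-1})$ the composition is unaffected, so this is a matter of wording, not a gap. The ``moreover'' clause is handled exactly as in the paper, via Lemma~\ref{lemma-3.11}.
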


\begin{proof}
Fix any integer $k\geqslant 2$ and suppose that $\varepsilon^{[k]}=\gamma_1\cdots\gamma_s$ for some $\gamma_1,\ldots,\gamma_s\in A$.

The definitions of the idempotent $\varepsilon^{[k]}$ and composition of partial maps (see \cite[Section~1.1]{Lawson-1998}) imply that either $\operatorname{dom}\gamma_1=\mathbb{N}$ or $\operatorname{dom}\gamma_1=\operatorname{dom}\varepsilon^{[k]}$, because the set $\mathbb{N}\setminus\operatorname{dom}\varepsilon^{[k]}$ is a singleton. If $\operatorname{dom}\gamma_1=\mathbb{N}$, then by Lemma~1 of \cite{Gutik-Savchuk-2018}, $\gamma_1$ is the partial shift of integers, and hence $\gamma_1\in \mathscr{C}_{\mathbb{N}}$. If $\operatorname{dom}\gamma_1=\operatorname{dom}\varepsilon^{[k]}$, then similar arguments imply that $\gamma_1$ is the partial shift of the set $\mathbb{N}\setminus\{k\}$. In both cases we put $\gamma_1^*=\gamma_1$.

Next we consider the element $\gamma_2$. The definition of the monoid $\mathbf{I}\mathbb{N}_{\infty}$ and Lemma~1 of \cite{Gutik-Savchuk-2018} imply that $(\operatorname{dom}\varepsilon^{[k]})\gamma_1\subseteq \operatorname{dom}\gamma_2$.

Suppose that $n_{\gamma_2}^{\mathbf{d}}-\underline{n}_{\gamma_2}^{\mathbf{d}}\geqslant k$. Then one of the following cases holds:
\begin{equation*}
  n_{\gamma_2}^{\mathbf{d}}=(k+1)\gamma_1 \qquad \hbox{or} \qquad n_{\gamma_2}^{\mathbf{d}}\leqslant(1)\gamma_1.
\end{equation*}
In the first case we have that $\left\{(i)\gamma_1\colon i=1,\ldots,k-1\right\}\subseteq \operatorname{dom}\gamma_2$ and hence we put $\gamma_2^*=\gamma_2$. In the second case by Lemma~1 of \cite{Gutik-Savchuk-2018}, $\gamma_2$ is the partial shift of integers, and we put $\gamma_2^*=\beta^{(1)\gamma_1}\alpha^{(1)\gamma_1\gamma_2}$. It it obvious that in both cases we have that $\gamma_1\gamma_2=\gamma_1^*\gamma_2^*$.

Suppose that $n_{\gamma_2}^{\mathbf{d}}-\underline{n}_{\gamma_2}^{\mathbf{d}}< k$. Then the equality $\varepsilon^{[k]}=\gamma_1\gamma_2\cdots\gamma_s$ implies that $n_{\gamma_2}^{\mathbf{d}}\leqslant(1)\gamma_1$, and hence the above presented arguments imply that $\gamma_1\gamma_2=\gamma_1^*\gamma_2^*$, where $\gamma_2^*=\beta^{(1)\gamma_1}\alpha^{(1)\gamma_1\gamma_2}$.

Using induction up to $s$ in the similar way we obtain the requested representation of the idempotent $\varepsilon^{[k]}=\gamma_1^*\cdots\gamma_s^*$ in form \eqref{eq3-1}. Also, since $k\notin\operatorname{dom}\varepsilon^{[k]}$, there exists a smallest positive integer $j\leqslant s$ such that $(1)\gamma_1\cdots\gamma_{j-1}\notin \operatorname{dom}\gamma_j$. This completes the first statement of the lemma. The second statement is obvious and  follows from Lemma~\ref{lemma-3.11}.
\end{proof}

\begin{theorem}\label{theorem-3.14}
Let $A$ be any infinite subset of $\mathbf{I}\mathbb{N}_{\infty}$ generating the monoid $\mathbf{I}\mathbb{N}_{\infty}$. Then there exists no a minimal subset $B\subseteq A$  generating $\mathbf{I}\mathbb{N}_{\infty}$.
\end{theorem}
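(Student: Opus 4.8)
The plan is to prove the theorem by showing that \emph{every} generating subset $B\subseteq A$ fails to be minimal: I will locate in each such $B$ a redundant element $b$, meaning $b\in\langle B\setminus\{b\}\rangle$. Once this is done, $B\setminus\{b\}$ is a proper subset of $B$ that still generates $\mathbf{I}\mathbb{N}_{\infty}$, so $B$ is not minimal; as $B$ was an arbitrary generating subset of $A$, there can be no minimal generating subset of $A$. The whole point is therefore to produce one removable generator, and the mechanism will be the level filtration $\left\{\mathbf{I}\mathbb{N}_{\infty}^{\boldsymbol{g}[k]}\right\}$ together with Lemmas~\ref{lemma-3.11},~\ref{lemma-3.12}, and~\ref{lemma-3.13}.

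First I would fix a generating subset $B\subseteq A$. By Theorem~\ref{theorem-3.7} the monoid $\mathbf{I}\mathbb{N}_{\infty}$ is not finitely generated, so $B$ is necessarily infinite. By Lemma~\ref{lemma-3.11} there is a \emph{finite} subset $B_\mathscr{C}^\circ\subseteq B$ with $\mathscr{C}_{\mathbb{N}}\subseteq\left\langle B_\mathscr{C}^\circ\right\rangle$. Since $B$ is infinite while $B_\mathscr{C}^\circ$ is finite, I may pick an element $b\in B\setminus B_\mathscr{C}^\circ$. Writing $\ell=n_{b}^{\mathbf{d}}-\underline{n}_{b}^{\mathbf{d}}$, so that $b\in\mathbf{I}\mathbb{N}_{\infty}^{\boldsymbol{g}[\ell]}$, I then set $m=\ell+2$; thus $m\geqslant 2$ (so that $\varepsilon^{[m]}$ is defined) and $\ell\leqslant m-1$.

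The heart of the argument is to regenerate $\varepsilon^{[m]}$ and all of $\mathscr{C}_{\mathbb{N}}$ without ever using $b$. Since $B$ generates $\mathbf{I}\mathbb{N}_{\infty}$, we have $\varepsilon^{[m]}\in\langle B\rangle$, so there is a factorization $\varepsilon^{[m]}=\gamma_1\cdots\gamma_s$ with $\gamma_i\in B$. Applying Lemma~\ref{lemma-3.13} with $A:=B$ and $A_\mathscr{C}^\circ:=B_\mathscr{C}^\circ$, I replace it by $\varepsilon^{[m]}=\gamma_1^*\cdots\gamma_s^*$, where each factor is either an element of $B\setminus\mathbf{I}\mathbb{N}_{\infty}^{\boldsymbol{g}[m-1]}$ or a product of elements of $B_\mathscr{C}^\circ$. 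A factor of the first kind lies outside $\mathbf{I}\mathbb{N}_{\infty}^{\boldsymbol{g}[m-1]}$, whereas $b\in\mathbf{I}\mathbb{N}_{\infty}^{\boldsymbol{g}[\ell]}\subseteq\mathbf{I}\mathbb{N}_{\infty}^{\boldsymbol{g}[m-1]}$, so no such factor equals $b$; a factor of the second kind lies in $\langle B_\mathscr{C}^\circ\rangle$, which does not involve $b$ because $b\notin B_\mathscr{C}^\circ$. Hence $\varepsilon^{[m]}\in\langle B\setminus\{b\}\rangle$, and likewise $\mathscr{C}_{\mathbb{N}}\subseteq\langle B_\mathscr{C}^\circ\rangle\subseteq\langle B\setminus\{b\}\rangle$. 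Now Lemma~\ref{lemma-3.12}, applied to the set $B\setminus\{b\}$, gives $\mathbf{I}\mathbb{N}_{\infty}^{\boldsymbol{g}[m]}\subseteq\langle B\setminus\{b\}\rangle$; since $\ell\leqslant m$ we get $b\in\mathbf{I}\mathbb{N}_{\infty}^{\boldsymbol{g}[\ell]}\subseteq\mathbf{I}\mathbb{N}_{\infty}^{\boldsymbol{g}[m]}\subseteq\langle B\setminus\{b\}\rangle$. Thus $\langle B\setminus\{b\}\rangle=\langle B\rangle=\mathbf{I}\mathbb{N}_{\infty}$, so $b$ is redundant and $B$ is not minimal.

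I expect the main obstacle to be precisely the step that produces a factorization of $\varepsilon^{[m]}$ avoiding $b$: a naive factorization over $B$ could use $b$ itself, or use other generators of intermediate level between $2$ and $m-1$, and removing $b$ would then break that particular expression. It is exactly Lemma~\ref{lemma-3.13} that resolves this, by rewriting every intermediate-level factor in terms of the fixed finite set $B_\mathscr{C}^\circ$ together with genuine high-level generators lying outside $\mathbf{I}\mathbb{N}_{\infty}^{\boldsymbol{g}[m-1]}$. The two quantitative facts that make the choice of $b$ legitimate are that $B$ is infinite (Theorem~\ref{theorem-3.7}) while $B_\mathscr{C}^\circ$ is finite (Lemma~\ref{lemma-3.11}), which guarantee a removable $b$ always exists; and the final reinsertion of $b$ into $\langle B\setminus\{b\}\rangle$ rests only on the monotonicity $\mathbf{I}\mathbb{N}_{\infty}^{\boldsymbol{g}[\ell]}\subseteq\mathbf{I}\mathbb{N}_{\infty}^{\boldsymbol{g}[m]}$ of the filtration.
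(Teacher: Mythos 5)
Your proof is correct, and it rests on the same three pillars as the paper's own argument --- Lemma~\ref{lemma-3.11} (the finite subset $B_\mathscr{C}^\circ$ generating $\mathscr{C}_{\mathbb{N}}$), the rewriting Lemma~\ref{lemma-3.13}, and Lemma~\ref{lemma-3.12} --- but you assemble them along a genuinely different and more transparent route. The paper works with the full set $A$: it iterates the rewriting construction infinitely many times to produce an increasing sequence of levels $j_p$ with $\mathbf{I}\mathbb{N}_{\infty}^{\boldsymbol{g}[j_p]}\subseteq\left\langle \left(A\setminus\mathbf{I}\mathbb{N}_{\infty}^{\boldsymbol{g}[j_p]}\right)\cup A_\mathscr{C}^\circ\right\rangle$, and then infers the non-existence of a minimal generating subset from the exhaustion $\mathbf{I}\mathbb{N}_{\infty}=\bigcup_{i}\mathbf{I}\mathbb{N}_{\infty}^{\boldsymbol{g}[i]}$, leaving the passage from these containments to the non-minimality of an \emph{arbitrary} generating subset $B\subseteq A$ rather implicit. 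You instead fix an arbitrary generating $B\subseteq A$, invoke Theorem~\ref{theorem-3.7} to see $B$ is infinite (hence $B\setminus B_\mathscr{C}^\circ\neq\varnothing$), and expose a single explicit redundant element $b$ by one application of Lemma~\ref{lemma-3.13} at level $m=\ell+2$: factors of the first kind cannot equal $b$ because $b\in\mathbf{I}\mathbb{N}_{\infty}^{\boldsymbol{g}[\ell]}\subseteq\mathbf{I}\mathbb{N}_{\infty}^{\boldsymbol{g}[m-1]}$, factors of the second kind cannot involve $b$ because $b\notin B_\mathscr{C}^\circ$, and Lemma~\ref{lemma-3.12} then gives back $b\in\mathbf{I}\mathbb{N}_{\infty}^{\boldsymbol{g}[m]}\subseteq\langle B\setminus\{b\}\rangle$. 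What your route buys is a clean handling of the quantifier ``no $B\subseteq A$ is minimal'': every candidate $B$ is defeated by the explicit proper generating subset $B\setminus\{b\}$, with no infinite iteration needed; what the paper's route buys is the stronger structural statement that each level of the filtration is generated over $A_\mathscr{C}^\circ$ by arbitrarily high-level elements of $A$. Two small choices in your write-up deserve note as correct rather than incidental: taking $m=\ell+2$ rather than $\ell+1$ keeps $m\geqslant 2$ even when $b\in\mathscr{C}_{\mathbb{N}}$ (i.e.\ $\ell=0$), so $\varepsilon^{[m]}$ is defined and the lemmas apply; and applying Lemma~\ref{lemma-3.11} to $B$ itself (not to $A$) is what makes $B_\mathscr{C}^\circ\subseteq B\setminus\{b\}$ available after the deletion.
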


\begin{proof}
By Lemma~\ref{lemma-3.11} there exists a minimal finite subset $A_\mathscr{C}^\circ$ of $A$ such that $\mathscr{C}_{\mathbb{N}}\subseteq \left\langle A_\mathscr{C}^\circ\right\rangle$. Put $j_1=2$. Since $\varepsilon^{[j_1]}=\gamma_1\cdots\gamma_{s_1}$ for some $\gamma_1,\ldots,\gamma_{s_1}\in A$, there exists the smallest positive integer $k_1$ such that $n_{\gamma_i}^{\mathbf{d}}-\underline{n}_{\gamma_i}^{\mathbf{d}}\leqslant k_1$ for any $i=1,\ldots,s_1$ and $n_{\gamma}^{\mathbf{d}}-\underline{n}_{\gamma}^{\mathbf{d}}\leqslant k_1$ for any $\gamma\in A_\mathscr{C}^\circ$. By Lemma~\ref{lemma-3.12}, $\mathbf{I}\mathbb{N}_{\infty}^{\boldsymbol{g}[k_1]}$ is a subsemigroup of $\left\langle A_\mathscr{C}^\circ\cup\left\{\gamma_1,\ldots,\gamma_{s_1}\right\}\right\rangle$.

Put $j_2=k_1+1$. Then by Lemmas~\ref{lemma-3.4},~\ref{lemma-3.5}, \ref{lemma-3.6} we have that
\begin{equation*}
\varepsilon^{[j_2]}\notin \left\langle A_\mathscr{C}^\circ\cup\left\{\gamma_1,\ldots,\gamma_{s_1}\right\}\right\rangle.
\end{equation*}

Suppose that $\varepsilon^{[j_2]}=\gamma_{s_1+1}\cdots \gamma_{s_2}$ for some $\gamma_{s_1+1},\ldots, \gamma_{s_2}\in A$, where $s_1+1\leqslant s_2$. By Lemma~\ref{lemma-3.13} there exist finitely many $\gamma_{s_1+1}^*,\ldots, \gamma_{s_2}^*\in A\cup \mathscr{C}_{\mathbb{N}}$ such that
\begin{equation*}
\varepsilon^{[k]}=\gamma_{s_1+1}^*\cdots\gamma_{s_2}^* \; \hbox{and  either} \; \gamma_j^*=\gamma_j\in A\setminus \mathbf{I}\mathbb{N}_{\infty}^{\textbf{\emph{g}}[k-1]} \; \hbox{or} \; \gamma_j^*\in \mathscr{C}_{\mathbb{N}}, \; \hbox{for} \; j=s_1+1,\ldots,s_2.
\end{equation*}
The second statement of Lemma~\ref{lemma-3.13} and  Lemma~\ref{lemma-3.12} imply that
\begin{equation*}
\mathbf{I}\mathbb{N}_{\infty}^{\boldsymbol{g}[j_2]}\subseteq \left\langle A_\mathscr{C}^\circ\cup\left\{\gamma_{s_1+1},\ldots,\gamma_{s_2}\right\}\right\rangle\subseteq \left\langle A\setminus\mathbf{I}\mathbb{N}_{\infty}^{\boldsymbol{g}[j_2]} \cup A_\mathscr{C}^\circ \right\rangle.
\end{equation*}

Next, if we repeat the above presented construction infinitely many times, then we obtain an increasing sequence of positive integers $\left\{j_p\right\}_{p\in\mathbb{N}}$  such that
\begin{equation*}
  \mathbf{I}\mathbb{N}_{\infty}^{\boldsymbol{g}[j_p]}\subseteq \left\langle A\setminus\mathbf{I}\mathbb{N}_{\infty}^{\boldsymbol{g}[j_p]} \cup A_\mathscr{C}^\circ \right\rangle \qquad \hbox{for any } \quad j_p.
\end{equation*}
Since \begin{equation*}
\mathbf{I}\mathbb{N}_{\infty}^{\boldsymbol{g}[0]}= \mathbf{I}\mathbb{N}_{\infty}^{\boldsymbol{g}[1]}\subsetneqq \mathbf{I}\mathbb{N}_{\infty}^{\boldsymbol{g}[2]}\subsetneqq \mathbf{I}\mathbb{N}_{\infty}^{\boldsymbol{g}[1]}\subsetneqq \cdots \subsetneqq \mathbf{I}\mathbb{N}_{\infty}^{\boldsymbol{g}[k]}\subsetneqq \cdots \subset \mathbf{I}\mathbb{N}_{\infty} \end{equation*}
and $\mathbf{I}\mathbb{N}_{\infty}=\displaystyle\bigcup_{i=1}^\infty\mathbf{I}\mathbb{N}_{\infty}^{\boldsymbol{g}[i]}$, Lemma~\ref{lemma-3.12} implies that the set $A$ does not contain  a minimal subset $B\subseteq A$ which generates the monoid $\mathbf{I}\mathbb{N}_{\infty}$.
\end{proof}

Theorem~\ref{theorem-3.14} implies the following corollary.

\begin{corollary}\label{corollary-3.15}
The monoid $\mathbf{I}\mathbb{N}_{\infty}$ does not contains a minimal generating set.
\end{corollary}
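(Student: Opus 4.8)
The plan is to deduce the statement directly from Theorems~\ref{theorem-3.7} and~\ref{theorem-3.14} by a short contradiction argument, the only real content being to reconcile the definition of a minimal generating set with the conclusion of Theorem~\ref{theorem-3.14}.

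First I would recall that, by Theorem~\ref{theorem-3.7}, the monoid $\mathbf{I}\mathbb{N}_{\infty}$ is not finitely generated; consequently every generating set of $\mathbf{I}\mathbb{N}_{\infty}$ is necessarily infinite. Now suppose, towards a contradiction, that $\mathbf{I}\mathbb{N}_{\infty}$ possesses a minimal generating set $A$. By the previous remark $A$ is infinite, so Theorem~\ref{theorem-3.14} applies to $A$ and yields that there is no minimal subset $B\subseteq A$ generating $\mathbf{I}\mathbb{N}_{\infty}$.

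The final step is to observe that $A$ itself violates this conclusion. By the definition of a minimal generating set recalled in the introduction, $A$ generates $\mathbf{I}\mathbb{N}_{\infty}$ and properly contains no generating set; hence, taking $B=A$, the set $A$ is a generating subset of itself admitting no proper generating subset, i.e.\ a minimal generating subset of $A$. This directly contradicts the conclusion of Theorem~\ref{theorem-3.14}, and the contradiction establishes that $\mathbf{I}\mathbb{N}_{\infty}$ has no minimal generating set.

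Since the substantive combinatorial work has already been carried out in Theorem~\ref{theorem-3.14} (via the filtration by the subsemigroups $\mathbf{I}\mathbb{N}_{\infty}^{\boldsymbol{g}[k]}$), I expect no genuine obstacle in the corollary itself. The single point that requires care is purely definitional: one must check that the two notions of minimality at play here---namely ``$A$ is a minimal generating set of the whole monoid'' and ``$A$ contains a minimal generating subset''---coincide precisely in the degenerate case $B=A$, which is exactly what makes the contradiction go through.
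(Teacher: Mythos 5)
Your proof is correct and is essentially the paper's own deduction: the paper derives Corollary~\ref{corollary-3.15} directly from Theorem~\ref{theorem-3.14}, with Theorem~\ref{theorem-3.7} implicitly ruling out finite generating sets exactly as you make explicit. Your only addition is spelling out the definitional point that a minimal generating set $A$ would itself serve as the forbidden minimal generating subset $B=A$, which is precisely the intended (unwritten) argument.
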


\medskip
\section*{\textbf{Acknowledgements}}
The author acknowledges Taras Banakh, Alex Ravsky and the referee for useful important comments and suggestions.

\end{document}